\colorlet{BLUE}{blue}
\newcommand{\im}{\textup{i}}
\newtheorem{theorem}{Theorem}[section]
\newtheorem{lemma}{Lemma}[section]
\theoremstyle{corollary}
\newtheorem{proposition}{Proposition}[section]
\theoremstyle{definition}
\newtheorem{definition}{Definition}[section]
\newtheorem{example}{Example}[section]
\newtheorem{remark}{Remark}[section]
\numberwithin{equation}{section}
\def\longdelete#1{}
\begin{document}

\title[Synchronization and Hopf Bifurcation in Stuart--Landau Networks]{Synchronization and Hopf Bifurcation in Stuart--Landau Networks}

\author[Chen, K.-W.]{Kuan-Wei Chen$^1$}
\address{$^1$ (Corresponding author) Meiji Institute for Advanced Study of Mathematical Sciences, Meiji University, 4-21-1 Nakano, Nakano-ku, Tokyo 164-8525, Japan.}
\email[Chen, K.-W.]{$^1$kwchen0613@meiji.ac.jp}

\author[Hsiao, T.-Y.]{Ting-Yang Hsiao$^2$}
\address{$^2$ (Corresponding author) International School for Advanced Studies (SISSA), Via Bonomea 265, 34136, Trieste, Italy.}
\email[Hsiao, T.-Y.]{$^2$thsiao@sissa.it}

\keywords{synchronization; Kuramoto model; Stuart--Landau oscillators; complex networks; Hopf bifurcation}

% \pacs[MSC Classification 2020]{Primary 34C15; Secondary 34D06}

\date{\today}

\maketitle

\begin{abstract}
The Kuramoto model has shaped our understanding of synchronization in complex systems, yet its phase-only formulation neglects amplitude dynamics that are intrinsic to many oscillatory networks. In this work, we revisit Kuramoto-type synchronization through networks of Stuart–Landau oscillators, which arise as the universal normal form near a Hopf bifurcation. For identical natural frequencies, we analyze synchronization in two complementary regimes. Away from criticality, we establish exponential complete synchronization on arbitrary finite connected undirected networks under explicit sufficient conditions on the parameters and initial data that prevent amplitude death. For ring networks, we identify an exact branch of synchronous periodic solutions arising from a supercritical Hopf bifurcation and use block-circulant Fourier analysis to determine the critical parameter values and multiplicities of the non-synchronous modes. For $N=7$ and $s=2$, a center-manifold reduction yields cubic amplitude equations for paired critical modes, identifying exact single-mode rotating-wave solutions and a standing-wave pattern at cubic order. Numerical simulations compare the dynamics restricted to the rotating-wave invariant subspaces with direct simulations of the full network.
\end{abstract}

% At criticality, we exploit network symmetries to analyze the onset of collective oscillations via Hopf bifurcation theory, demonstrating the emergence of synchronized periodic states in ring-symmetric networks. Our results clarify how amplitude dynamics enrich the structure of synchronized states and provide a bridge between classical Kuramoto synchronization and amplitude-inclusive models in complex networks.

\section{Introduction}

Over the past fifty years, the Kuramoto model
\cite{kuramoto1975self,kuramoto1984chemical}
has played a central role in the study of synchronization, serving not
only as a prototypical mathematical model but also as a conceptual
framework for understanding collective dynamics in large populations
of interacting oscillators. Its influence extends across disciplines
ranging from physics
\cite{Hsiao2026,bronski2012fully,bronski2018configurational,
lee2024complexified},
applied mathematics
\cite{hsiao2025equivalence1,hsiao2025equivalence2,
chen2024complete,bronski2014spectral,bronski2016graph,
bronski2021synchronisation},
neuroscience, biology
\cite{damulewicz2020communication,mirollo1990synchronization,
strogatz1993coupled,o2017oscillators},
to network science
\cite{chen2024phase,dorfler2012synchronization,
dorfler2013synchronization,dorfler2014synchronization,
ermentrout1991adaptive,ermentrout1991multiple},
and non-Abelian and operator-valued extensions
\cite{lohe2009non,bronski2020matrix,hsiao2023synchronization,
deville2019synchronization}.
Beyond its specific formulation, the Kuramoto paradigm has shaped our
intuition about how coherence emerges from simple coupling rules, and
it continues to inform how synchronization phenomena are modeled and
interpreted in complex systems. We refer interested readers to
\cite{strogatz2012sync,rodrigues2016kuramoto} for a comprehensive
overview.

\textcolor{black}{
A classical route to synchronization in the Kuramoto phase model is
the invariant-arc, or half-circle, mechanism: when the initial phases
lie in an arc of length strictly smaller than $\pi$, the extremal
phases remain trapped and the phase diameter contracts under suitable
coupling assumptions. This mechanism and its graph-dependent variants
have been studied extensively in the Kuramoto literature; see, for
example,
\cite{jadbabaie2004stability,ha2010complete,zhu2022emergence,
abdalla2026expander}
and the surveys
\cite{dorfler2014synchronization,rodrigues2016kuramoto}. The synchronization argument developed below is rooted in the
classical invariant-arc mechanism, but the presence of dynamically
evolving amplitudes introduces substantial new difficulties. In
particular, amplitudes may approach zero and enter the phase equations
through nonuniform, time-dependent ratios. We establish explicit
conditions that prevent these amplitude effects from destroying the
phase-contraction mechanism, and thus provide a rigorous extension of
graph-based Kuramoto synchronization to Stuart--Landau networks on
arbitrary connected graphs.}

Beyond serving as a specific mathematical model, the framework of
Kuramoto oscillators has come to define how synchronization itself is
conceptualized in networked systems. But even under the most classical
assumptions, a complete characterization of synchronized states has
only been achieved relatively recently. Recent work by
Hsiao--Lo--Zhu
\cite{hsiao2025equivalence1,hsiao2025equivalence2}
has demonstrated that, within the classical Kuramoto model, various
notions of synchronization are in fact equivalent. In particular,
under the standard Kuramoto setting, synchronization cannot be
generated by periodic orbits; instead, synchronization necessarily
emerges through phase locking. For the case of identical natural
frequencies, we refer the reader to the comprehensive discussion in
\cite{watanabe1994constants,dai2025transient}. Historically, over the
past decades, synchronization has often been implicitly identified
with phase alignment, a viewpoint that has proven remarkably
successful in explaining collective behavior across a wide range of
applications. However, recent developments across multiple disciplines
have highlighted that synchronization cannot be reduced to phase
coherence alone
\cite{ha2012class,thumler2023synchrony,
hsiao2023synchronization,lee2024complexified,Hsiao2026}.
In particular, finite-time blow-up phenomena reveal the essential
distinction between generalized phases \cite{Hsiao2026} and classical
phases \cite{kuramoto1975self,kuramoto1984chemical}. This broader
perspective calls into question the extent to which classical
phase-only descriptions capture the full structure of synchronized
states. If one aims to capture a wider range of alignment mechanisms
observed in real oscillatory systems---such as amplitude
synchronization---it becomes necessary to return to a more fundamental
level of description. From this perspective, networks of
Stuart--Landau oscillators
\cite{andronov1967cycles,landau1944problem,stuart1960non,
hopf2002abzweigung}
provide a natural setting in which the foundations of Kuramoto-type
synchronization can be revisited; see also
\cite{ermentrout1991adaptive,ermentrout1991multiple,chen2024phase}
for modern treatments. For the case of non-identical coupling
strengths, please refer to Chen--Shih \cite{chen2024phase}. Finally, we
remark that under certain structural and dynamical assumptions, the
Stuart--Landau system can be reduced to a pure phase dynamics. The
earliest systematic studies along this direction can be traced back
to the work of Aronson--Ermentrout--Kopell
\cite{aronson1990amplitude}. More recent and modern treatments of
phase reduction can be found, for example, in
Nakao \cite{nakao2016phase} and
Bick--B\"ohle--Kuehn \cite{bick2024higher}.

\textcolor{black}{
Amplitude-inclusive oscillator dynamics itself has a long history and
is substantially broader than the synchronization regime considered
here. Classical studies of coupled limit-cycle and
Landau--Stuart-type oscillators include
\cite{shiino1989synchronization,matthews1991dynamics,
hakim1992dynamics}, while the review
\cite{acebron2005kuramoto} discusses amplitude extensions of the
Kuramoto framework. Such systems, especially in globally or
mean-field coupled settings, may exhibit cluster states,
quasiperiodicity, chaos, amplitude turbulence, and complex dynamical
transitions; see also \cite{ku2015dynamical}. The present work does not
attempt to classify this broader range of amplitude-mediated
phenomena. We instead study a regime in which amplitude death is
excluded and the amplitudes ultimately cease to obstruct the
classical phase-contraction mechanism. In contrast with much of the
classical mean-field literature, our synchronization results apply to
arbitrary finite connected undirected graphs. This provides a first
step toward transferring graph-based synchronization theory from
phase-only models to amplitude-inclusive oscillator networks.
}

In this work, we focus on networks of Stuart--Landau oscillators with
identical natural frequencies and investigate synchronization across
general network topologies. Our analysis reveals that a single
synchronized oscillatory state admits complementary mathematical
descriptions, depending on the network structure and parameter regime.
Here and below, a parameter value is called critical when the
linearization at the origin has an eigenvalue with zero real part.

\textcolor{black}{
The principal contribution of the paper is the general-network
synchronization result. It identifies explicit conditions under which
the classical graph-based invariant-arc mechanism remains effective
despite the presence of dynamically evolving amplitudes: the
amplitudes remain uniformly separated from zero, the phase interval
remains invariant, and the network converges to complete
synchronization. This establishes a rigorous graph-level extension of
Kuramoto synchronization to Stuart--Landau networks and demonstrates
the robustness of the phase-contraction mechanism under nontrivial
amplitude dynamics.
}

The two approaches provide complementary descriptions of the same
synchronized state: the first establishes its robustness across
general network topologies, while the second resolves its onset and
modal structure in highly symmetric networks at criticality
$\mu=0$. In Section \ref{K}, we consider general connected network
topologies and study synchronization away from criticality. We first
characterize sufficient conditions that prevent amplitude death, and
then show that oscillator amplitudes asymptotically align, leading the
system toward complete synchronization. \textcolor{black}{In Section \ref{Hopf}, we turn
to ring networks and study the synchronous Hopf bifurcation together
with the critical structure of the non-synchronous spatial modes.
While the synchronized dynamics of two coupled Stuart--Landau
oscillators have been studied in
\cite{ermentrout1991adaptive,ermentrout1991multiple}, larger ring
networks contain additional non-synchronous modes and
symmetry-induced multiplicities. We first restrict the dynamics to the
invariant synchronous manifold and show that a supercritical Hopf
bifurcation at $\mu=0$ gives rise to an exact branch of synchronous
periodic solutions of the full network. We then analyze the full
linearization at the origin using its block-circulant structure and
Fourier diagonalization. This analysis yields the critical parameter
values and multiplicities of the non-synchronous modes, with the
all-to-all coupling case treated separately. Since this linear
information alone does not determine the existence or stability of
nonlinear non-synchronous branches, we complement it with a detailed
center-manifold analysis for $N=7$ and $s=2$, where the first
non-synchronous critical value is shared by the paired modes $j=2$
and $j=7$. The resulting cubic amplitude equations describe the
rotating-wave and standing-wave patterns and classify the
corresponding equilibria within the reduced squared-amplitude
equations. The numerical simulations display the corresponding
rotating-wave patterns and compare their behavior within the
corresponding invariant subspaces with that of the unrestricted full
system.}

\textcolor{black}{
The ring-network analysis is intended as a complementary structural
study of how spatial symmetry organizes critical modes and their
multiplicities. Except for the exact synchronous and single-mode
rotating-wave solutions explicitly identified below, we do not claim
a general existence or stability theory for nonlinear
non-synchronous branches of the full network. In particular, the
standing-wave stability classification is made only within the
leading-order cubic squared-amplitude equations.
}

We emphasize that for general network topologies, the parameter
$\mu$ must be sufficiently large to guarantee synchronization, and we
provide explicit sufficient conditions on the initial data
(see, Theorem \ref{thm:identical-complete} and
Theorem \ref{cor:identical-complete}). In contrast, when $\mu$ is
close to zero, a restriction to highly symmetric---yet still broadly
representative---ring networks allows for a block-diagonalization of
the linearized system. \textcolor{black}{This structural simplification enables a precise analysis of the
synchronous Hopf branch and of the critical parameter values and
multiplicities of the non-synchronous spatial modes
(see Theorem \ref{first bifurcation}--Theorem
\ref{additional_hopf}).}

\section{Model and synchronization}
We consider an ensemble of $N$ diffusively coupled Stuart--Landau oscillators governed by
\begin{equation}
\dot z_\ell \;=\; (\mu + \im\omega_\ell)\,z_\ell \;-\; |z_\ell|^2 z_\ell
\;+\; c \sum_{k=1}^N a_{\ell k}\,(z_k - z_\ell), \quad \ell=1,\dots,N.
\label{eq:SL-network}
\end{equation}
Equation \eqref{eq:SL-network} preserves the global $U(1)$ phase-shift symmetry and couples the Hopf normal form to network diffusion. Writing polar coordinates $z_\ell=r_\ell e^{\im\theta_\ell}$ (for $r_\ell>0$), yields
\begin{equation} \label{eq:polar-theta}
\left\{\begin{aligned}
\dot r_\ell &= (\mu-r_\ell^2)r_\ell + c\sum_{k=1}^N a_{\ell k}\!\left(r_k\cos(\theta_k-\theta_\ell)-r_\ell\right), \\
\dot\theta_\ell &= \omega_\ell + c\sum_{k=1}^N a_{\ell k}\frac{r_k}{r_\ell}\sin(\theta_k-\theta_\ell).
\end{aligned}\right.
\end{equation}
\begin{remark}
We emphasize that whenever \eqref{eq:polar-theta} is well-defined, i.e., whenever
\[
r_\ell(t) > 0 \quad \text{for all } \ell \in \{1,\ldots,N\} \text{ and for all } t > 0,
\]
the system \eqref{eq:polar-theta} is equivalent to the original system \eqref{eq:SL-network}.
\end{remark}

\begin{remark}[Phases and unwrapping]\label{rem:unwrapped}
Throughout, we regard $\theta_\ell$ as an \emph{unwrapped} angle, i.e.\ a continuous map
$\theta_\ell:\,[0,\infty)\to\mathbb{R}$ chosen on any interval where $r_\ell(t)>0$ and
satisfying $e^{\im\theta_\ell(t)}=z_\ell(t)/r_\ell(t)$. Accordingly, phase differences
$\theta_k-\theta_\ell$ in \eqref{eq:polar-theta} are taken in $\mathbb{R}$.
When a modulo-$2\pi$ notion is intended we explicitly use the circular distance
\[
\operatorname{dist}_{\mathbb{S}^1}(\alpha,\beta)
=\min_{m\in\mathbb{Z}}|\alpha-\beta+2\pi m|\in[0,\pi].
\]
The evolution \eqref{eq:polar-theta} is invariant under $\theta_\ell\mapsto \theta_\ell+2\pi m_\ell$,
so wrapped and unwrapped conventions are dynamically equivalent as long as $r_\ell>0$.
If some $r_\ell(t)$ reaches $0$, the angle $\theta_\ell$ is undefined; statements that involve
$\theta_\ell$ are understood on intervals where $r_\ell>0$, or else via the complex formulation
\eqref{eq:SL-network}. In particular, to avoid the division by $r_\ell$ in
\eqref{eq:polar-theta}, we either assume an anti-amplitude-death condition after some time
(e.g.\ $\inf_{t\ge T} r_\ell(t)>0$ for all $\ell$) or carry out arguments directly in the complex
coordinates.
\end{remark}

\paragraph*{Parameters and standing assumptions.}
\begin{itemize}[leftmargin=*]
\item \textbf{Local dynamics.} $\mu>0$ is the Hopf parameter. For an uncoupled unit ($c=0$) the origin is unstable and there is a stable limit cycle of radius $\sqrt{\mu}$ with natural frequency $\omega_\ell$. The cubic coefficient has been non-dimensionalized to $1$.
\item \textbf{Frequencies.} $\omega_\ell\in\mathbb{R}$ denotes the natural frequency of node $\ell$. The identical frequency refers to $\omega_\ell\equiv\omega$, in which one may, if convenient, pass to a rotating frame to set the common frequency to zero.
\item \textbf{Coupling strength.} $c\ge 0$ scales the diffusive interaction. We keep $c$ separate from the topology so that $A=[a_{\ell k}]$ encodes structure while $c$ is a scalar control. 

\item \textbf{Unweighted, undirected, connected adjacency matrix.}
We work on a simple undirected graph $G=(V,E)$ with $V=\{1,\dots,N\}$ and adjacency matrix $A=[a_{\ell k}]$ satisfying:
\begin{enumerate}[label=(\roman*),leftmargin=*]
\item \textbf{Binary edges:} $a_{\ell k}\in\{0,1\}$ for all $\ell,k$, and $a_{\ell\ell}=0$ (no self-loops).
\item \textbf{Symmetry (undirected):} $a_{\ell k}=a_{k\ell}$ for all $\ell,k$.
\item \textbf{Connectivity:} $G$ is connected; equivalently, with $D=\mathrm{diag}(d_1,\dots,d_N)$, $d_\ell=\sum_k a_{\ell k}$, and the combinatorial Laplacian, 
\begin{align} \label{L=D-A}
    L=D-A,
\end{align}
we have \(\ker L = \mathrm{span}\{\mathbf{1}\}\); that is, the algebraic connectivity satisfies \(\lambda_2(L)>0\), where \(\mathbf{1}:=(1,\ldots,1)^{\mathsf T}\).
\end{enumerate}
With this notation the coupling can be written compactly as
\begin{align} \label{-cL}
c\sum_{k=1}^N a_{\ell k}(z_k-z_\ell)\;=\;-\,c\sum_{k=1}^N L_{\ell k}\,z_k,
\end{align}
so that diffusion acts along Laplacian modes.
\end{itemize}

\begin{remark}
The system is equivariant under the global phase shift $z_\ell\mapsto e^{\im\varphi}z_\ell$, hence phase locking is understood modulo a common rotation. In polar coordinates the angle $\theta_\ell$ is defined only for $r_\ell>0$; statements that involve $\theta_\ell$ are interpreted on the set where $r_j$ stays positive (or by continuity through the complex form). All symbols are dimensionless unless otherwise stated.
\end{remark}
\begin{remark} \label{remark 24}
    For the unweighted, undirected complete graph $K_N$ with $V=\{1,\dots,N\}$ and adjacency matrix $A$, we have
    \begin{align*}
        L=D-A=(N-1)I-(J-I)=N I-J,
    \end{align*}
where $I$ is identity matrix and $J_{\ell k}=1$ for all $\ell,k\in\{1,\ldots,N\}$. Therefore, we have
\begin{align*}
    0=\lambda_1(L)<\lambda_2(L)=\ldots=\lambda_N(L)=N.
\end{align*}
\end{remark}

\textcolor{black}{The primary objective of this paper is to identify explicit sufficient conditions that guarantee uniform exponential complete synchronization in the identical natural-frequency case on general connected undirected graphs.}
For the convenience of subsequent analysis, we introduce several notations and auxiliary functions below. Let
\begin{align*}
z(t) := (z_1(t), \ldots, z_N(t)) \in \mathbb{C}^N,  
\end{align*}

\begin{align*}
   r(t) := (r_1(t), \ldots, r_N(t)) \in \mathbb{R}_{\ge 0}^N, \quad \theta(t) := (\theta_1(t), \ldots, \theta_N(t)) \in \mathbb{R}^N,
\end{align*}
for \( t \ge 0 \), and let
\[
\Omega := (\omega_1, \ldots, \omega_N) \in \mathbb{R}^N
\]
denote the vector of natural frequencies. 

Throughout, we consider solutions $z(t)$ of the original
Stuart--Landau network \eqref{eq:SL-network}.
In fact, under suitable initial data and coupling strength, each amplitude remains strictly positive for all $t \ge 0$; that is,
\(r_\ell(t) = |z_\ell(t)| > 0\) for all \(\ell \in \{1,\ldots,N\}\) (see Lemma~\ref{antideath}).
Hence the polar representation \(z_\ell(t) = r_\ell(t)e^{\im\theta_\ell(t)}\) is globally well-defined,
and system \eqref{eq:polar-theta} is equivalent to \eqref{eq:SL-network}.
Under this anti-amplitude-death condition, we introduce the following notions of synchronization.

\begin{definition}[Frequency-amplitude synchronization] \label{def 2.1}
We say that the network achieves \emph{frequency-amplitude synchronization} if
\[
\lim_{t \to \infty} |r_\ell(t) - r_k(t)| = 0, \qquad
\lim_{t \to \infty} |\dot r_\ell(t) - \dot r_k(t)| = 0, \qquad
\lim_{t \to \infty} |\dot\theta_\ell(t) - \dot\theta_k(t)| = 0,
\]
for all $\ell,k \in \{1,\ldots,N\}$. If the above convergence occurs at an exponential rate, we say that the system
achieves \emph{exponential frequency-amplitude synchronization}.
\end{definition}

\begin{definition}[Complete synchronization] \label{def 2.2}
If, in addition to frequency-amplitude synchronization, the phases satisfy
\[
\lim_{t \to \infty} \operatorname{dist}_{\mathbb{S}^1}(\theta_\ell(t), \theta_k(t)) = 0,
\quad \forall\, \ell,k \in \{1,\ldots,N\},
\]
then the network is said to achieve \emph{complete synchronization}.
If the convergence is exponential, we refer to it as 
\emph{exponential complete synchronization}.
\end{definition}

\begin{remark}
A necessary condition for complete synchronization is that all oscillators share the same natural frequency, namely
\begin{align*}
\omega_\ell = \omega, \quad \ell = 1,2,\ldots,N.
\end{align*}
When this condition holds, the oscillators are said to be identical.
\end{remark}

\textcolor{black}{To ensure that the polar formulation remains globally valid, we next derive explicit sufficient conditions, adapted to the present general-graph setting, that prevent any component amplitude from reaching zero. We also record an instability criterion for complete amplitude death.}

\begin{lemma}[Instability of Complete Amplitude Death]\label{amplitide-death}
If $\mu > c\,\lambda_{\max}(L)$, then the complete amplitude death equilibrium $z=0$ of \eqref{eq:SL-network} is unstable.
\end{lemma}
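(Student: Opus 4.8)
The plan is to linearize \eqref{eq:SL-network} about the equilibrium $z=0$ and read off instability from the spectrum of that linearization. Because the cubic term $|z_j|^2z_j$ has vanishing first derivatives at the origin, and because the coupling can be written as $-c\sum_k L_{jk}z_k$ by \eqref{-cL}, the linearized system is $\dot z = Mz$ with
\[
M := \mu I + \im\,W - cL, \qquad W := \mathrm{diag}(\omega_1,\dots,\omega_N).
\]
By the standard principle of linearized instability---an equilibrium is unstable whenever the linearization has an eigenvalue with positive real part---it then suffices to exhibit an eigenvalue of $M$ with strictly positive real part.

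The main step is to pass to the Hermitian part of $M$. Since $L$ and $W$ are real symmetric and $\mu\in\mathbb{R}$, we have $M^\ast = \mu I - \im\,W - cL$, hence $\tfrac12(M+M^\ast) = \mu I - cL$, whose eigenvalues are $\mu - c\lambda_i(L)$ for $i=1,\dots,N$. Using $0=\lambda_1(L)\le\cdots\le\lambda_N(L)=\lambda_{\max}(L)$, these all lie in $[\,\mu-c\lambda_{\max}(L),\,\mu\,]$, so the hypothesis $\mu>c\lambda_{\max}(L)$ makes $\tfrac12(M+M^\ast)$ positive definite. I would then invoke the elementary bound that the real part of any eigenvalue lies between the extreme eigenvalues of the Hermitian part: if $Mv=\lambda v$ with $\langle v,v\rangle=1$, then
\[
\operatorname{Re}\lambda = \big\langle v,\ \tfrac12(M+M^\ast)v\big\rangle \ \ge\ \lambda_{\min}\!\big(\tfrac12(M+M^\ast)\big) = \mu - c\lambda_{\max}(L) > 0 .
\]
Hence every eigenvalue of $M$ has real part at least $\mu-c\lambda_{\max}(L)>0$; in particular such an eigenvalue exists, and $z=0$ is unstable---indeed a linearly exponentially repelling source.

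I do not expect a genuine obstacle here. The only point to be careful about is that $W$ and $L$ need not commute, so $M$ cannot in general be diagonalized in a fixed eigenbasis and its spectrum cannot be read off termwise; routing the estimate through the Hermitian part sidesteps this, and this non-commutativity is exactly what forces the hypothesis to be $\mu>c\lambda_{\max}(L)$ rather than merely $\mu>0$. (In the identical-frequency case $W=\omega I$ one has $M\mathbf{1}=(\mu+\im\omega)\mathbf{1}$, and instability follows already from $\mu>0$.) It is also worth recording that $z=0$ is the unique state with $r_j\equiv 0$ for all $j$, so ``complete amplitude death'' is unambiguously an equilibrium of \eqref{eq:SL-network}.
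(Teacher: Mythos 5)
Your proposal is correct and follows essentially the same route as the paper: linearize at the origin to get $\mu I - cL + \im\,\mathrm{diag}(\omega_1,\dots,\omega_N)$, then bound the real parts of its eigenvalues via the Hermitian part $\mu I - cL$ (the paper invokes exactly this Bendixson-type inequality, which you spell out via the Rayleigh quotient). Your added remarks on non-commutativity and the identical-frequency case are accurate but not needed for the argument.
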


\begin{proof}
\color{black}
We identify $\mathbb{C}^{N}$ with $\mathbb{R}^{2N}$. Although the map
\(z\mapsto |z|^{2}z\) is not holomorphic, it is a polynomial in the real
and imaginary parts of $z$. Hence the vector field is real analytic in a neighborhood of the
origin.

Set
\begin{align*}
    R^{2}(t):=\sum_{\ell=1}^{N}|z_{\ell}(t)|^{2}
    =\|z(t)\|_{2}^{2}.
\end{align*}
Using the network equation, the symmetry and positive semidefiniteness of
the graph Laplacian \(L\), and the fact that the frequency terms have zero
real part, we obtain
\begin{align*}
    \frac{1}{2}\frac{\mathrm d}{\mathrm dt}R^{2}
    &=
    \mu R^{2}
    -c\,\bar{z}^{\mathsf T} Lz
    -\sum_{\ell=1}^{N}|z_{\ell}|^{4} \geq
    \bigl(\mu-c\,\lambda_{\max}(L)\bigr)R^{2}
    -R^{4}.
\end{align*}
Here we used
\begin{align*}
    \bar{z}^{\mathsf T} Lz\leq \lambda_{\max}(L)\|z\|_{2}^{2}
    \quad\text{and}\quad
    \sum_{\ell=1}^{N}|z_{\ell}|^{4}
    \leq
    \left(\sum_{\ell=1}^{N}|z_{\ell}|^{2}\right)^2.
\end{align*}

Let
\begin{align*}
    \delta:=\mu-c\,\lambda_{\max}(L)>0,
    \quad
    \rho:=\sqrt{\frac{\delta}{2}}.
\end{align*}
As long as \(0<R(t)\leq \rho\), the preceding estimate gives
\begin{align*}
    \frac{\mathrm d}{\mathrm dt}R^{2}(t)
    \geq
    \delta R^{2}(t).
\end{align*}
Consequently,
\begin{align*}
    R^{2}(t)\geq R^{2}(0)e^{\delta t}
\end{align*}
for as long as the trajectory remains in the ball
$\{z\in\mathbb{C}^{N}:\|z\|_{2}\leq\rho\}$.
Thus every nonzero initial condition arbitrarily close to the origin
eventually leaves this ball. Therefore $z=0$ is not Lyapunov stable.
\end{proof}

However, guaranteeing that each $|z_\ell|=r_\ell$ remains strictly nonzero throughout the evolution requires a more refined description. In fact, the absence of amplitude death in every component of the system (so that \eqref{eq:polar-theta} remains well-defined) depends not only on the system parameters but also on the initial conditions. We present sufficient conditions ensuring the persistence of nonvanishing amplitudes below.

\begin{lemma}[Persistence of Nonvanishing Amplitudes]\label{antideath} \color{black}
Assume that
\begin{align}\label{c star}
    0<c<c^*
    :=\frac{2\mu}{3\sqrt{3N}\,\lambda_{\max}(L)}.
\end{align}
Let \(r^*\) denote the smaller positive root of
\begin{align}\label{definition r star}
    \mu x-x^3
    =
    c\sqrt{N\mu}\,\lambda_{\max}(L).
\end{align}
Suppose that the initial data satisfy
\begin{equation}\label{initial R}
\begin{aligned}
    \sum_{\ell=1}^{N}r_\ell^2(0)<N\mu,\quad r_\ell(0)>r^*, \quad\ell=1,\ldots,N.
\end{aligned}
\end{equation}
Then the solution \(z(t)\) of \eqref{eq:SL-network} exists globally and
satisfies
\begin{align}\label{antideath bounds}
    \sum_{\ell=1}^{N}r_\ell^2(t)<N\mu,
    \quad
    r_\ell(t)>r^*,
    \quad
    t\geq0,\quad \ell=1,\ldots,N.
\end{align}
Consequently, no component undergoes amplitude death, the polar
representation \(z_\ell=r_\ell e^{\im\theta_\ell}\) is globally
well-defined, and \eqref{eq:polar-theta} is equivalent to
\eqref{eq:SL-network} for all \(t\geq0\). Moreover,
\begin{align}\label{limsup amplitude bound}
    \limsup_{t\to\infty}
    \max_{1\leq\ell\leq N}r_\ell(t)
    \leq\sqrt{\mu}.
\end{align}
\end{lemma}

We provide the proof of Lemma \ref{antideath} in Appendix \ref{sec:App A}. Next, we consider the network \eqref{eq:SL-network} \textcolor{black}{with $\omega_\ell \equiv 0$} for all $\ell=1,\dots,N$. 
To quantify the total squared frequency of the oscillators, we introduce the energy-type functional
\begin{equation}\label{H-def}
\mathcal{H}(t) := \int_0^t \sum_{\ell=1}^N \Big( |\dot{r}_\ell(s)|^2 + |r_\ell(s)\dot{\theta}_\ell(s)|^2 \Big)\,ds.
\end{equation}
Multiplying the first equation of \eqref{eq:polar-theta} by $\dot{r}_\ell$ and the second equation by $r_\ell^2\dot{\theta}_\ell$, summing over $\ell=1,\dots,N$, and integrating in time over $[0,t]$, we obtain
\begin{equation}\label{energy-balance}
\mathcal{H}(t) 
=  \int_0^t \mathcal{I}(s)\,ds + \int_0^t \mathcal{II}(s)\,ds,
\end{equation}
where
\begin{align}
\mathcal{I}(t) &:= \sum_{\ell=1}^N \Big((\mu-r_\ell^2)r_\ell - c \textcolor{black}{d_\ell r_\ell}\Big)\dot{r}_\ell, \label{LambdaN-def}\\[4pt]
\mathcal{II}(t) &:= c\sum_{\ell=1}^N\sum_{k=1}^N a_{\ell k}\Big(r_k\dot{r}_\ell\cos(\theta_k-\theta_\ell) + r_k r_\ell \sin(\theta_k-\theta_\ell)\dot{\theta}_\ell\Big). \label{HN-def}
\end{align}
Exploiting the symmetry of $a_{jk}$, \textcolor{black}{$\mathcal{I}$}, $\mathcal{II}$ can be rewritten as
\begin{equation*}\label{HN-sym}
\begin{aligned}
\mathcal{I}(t)&=\textcolor{black}{\frac{d}{d t}\sum_{\ell=1}^N\left(\frac{\mu}{2}r^2_{\ell}-\frac{1}{4}r^4_{\ell}-\frac{c}{2}d_{\ell}r^2_{\ell}\right),} \\ 
\mathcal{II}(t)&= \frac{c}{2}\,\frac{d}{dt}\sum_{\ell=1}^N\sum_{k=1}^N \Big(a_{\ell k}\,r_\ell r_k\cos\!\big(\theta_k(t)-\theta_\ell(t)\big)\Big).
\end{aligned}
\end{equation*}

\textcolor{black}{If $r_\ell$ remains bounded, then the right-hand side of
\eqref{energy-balance} is bounded by a constant independent of time.
Moreover,
\[
    \dot{\mathcal H}(t)
    =
    \sum_{\ell=1}^{N}
    \left(
        |\dot r_\ell(t)|^2+
        |r_\ell(t)\dot\theta_\ell(t)|^2
    \right)
    =
    \sum_{\ell=1}^{N}|\dot z_\ell(t)|^2.
\]
Since the vector field in \eqref{eq:SL-network} is polynomial, boundedness
of $z(t)$ implies boundedness of both $\dot z(t)$ and
$\ddot z(t)=DF(z(t))\dot z(t)$. Hence $\dot{\mathcal H}$ is uniformly
continuous. The boundedness of $\mathcal H$ gives
$\dot{\mathcal H}\in L^1(0,\infty)$, and Barbalat's lemma therefore yields
\[
    \lim_{t\to\infty}\dot{\mathcal H}(t)=0.
\]
Equivalently,
\[
    \lim_{t\to\infty}\sum_{\ell=1}^N
    \Big(
        |\dot{r}_\ell(t)|^2+
        |r_\ell(t)\dot{\theta}_\ell(t)|^2
    \Big)=0.
\]}
This yields the following lemma.

\begin{lemma}[Energy Functional]\label{EF}
Consider the network \eqref{eq:SL-network} with $\omega_\ell\equiv 0$ for all $\ell=1,\ldots,N$. 
Let $(r(t),\theta(t))$ be a solution of \eqref{eq:polar-theta}. \textcolor{black}{Assume that there exist constants $0<m<M<\infty$ such that
\begin{align*}
    m\leq r_\ell(t)\leq M,
    \quad
    t\geq0,\quad \ell=1,\ldots,N.
\end{align*}}
Then 
\begin{align*}
    \lim_{t\to\infty}\dot{r}_\ell=0
    \quad\text{and}\quad 
    \lim_{t\to\infty}\dot{\theta}_\ell=0.
\end{align*}
\end{lemma}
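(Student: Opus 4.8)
The plan is to run a Barbalat/LaSalle-type argument on the energy functional $\mathcal{H}$ of \eqref{H-def}, along the lines already indicated above the statement. The first and most computational step is to bound the right-hand side of the energy balance \eqref{energy-balance} uniformly in $t$, using only that $0<r_j(t)<l$. For the interaction term this is immediate from the symmetric form \eqref{HN-sym}: $\int_0^t\mathcal{II}(s)\,ds=\tfrac{c}{2}\big[\,\sum_{j,k}a_{jk}\,r_j r_k\cos(\theta_k-\theta_j)\,\big]_0^t$ is an exact increment, bounded in absolute value by $c\,l^2\sum_{j,k}a_{jk}$ for all $t$. For $\int_0^t\mathcal{I}$, the key point is that $-c\sum_k a_{jk}=-c\,d_j$ is a constant, so the integrand \eqref{LambdaN-def} is itself a total derivative, $\mathcal{I}(t)=\tfrac{d}{dt}\sum_{j}\big(\tfrac{\mu}{2}r_j^2-\tfrac14 r_j^4-c\,d_j r_j\big)$, and hence $\int_0^t\mathcal{I}(s)\,ds$ is again uniformly bounded. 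It follows that $\mathcal{H}(t)$ is bounded above by a constant independent of $t$; since $\dot{\mathcal{H}}(t)=\sum_j\big(|\dot r_j(t)|^2+|r_j(t)\dot\theta_j(t)|^2\big)\ge 0$, the function $\mathcal{H}$ is nondecreasing and bounded, hence $\int_0^\infty\dot{\mathcal{H}}(s)\,ds<\infty$.

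The second step is to show $\dot{\mathcal{H}}$ is uniformly continuous on $[0,\infty)$, which is cleanest in complex coordinates. With $\omega_j\equiv0$ and \eqref{-cL} we have $\dot z_j=\mu z_j-|z_j|^2 z_j-c\sum_k L_{jk}z_k$ and $\dot{\mathcal{H}}(t)=\sum_j|\dot z_j(t)|^2$. Since $|z_j|=r_j<l$, the vector field is bounded along the solution, so $\dot z_j$ is bounded; differentiating once more and using boundedness of $z_j$ and $\dot z_j$ shows $\ddot z_j$ is bounded, whence $\tfrac{d}{dt}|\dot z_j|^2=2\,\Re\!\big(\overline{\dot z_j}\,\ddot z_j\big)$ is bounded and $\dot{\mathcal{H}}$ is Lipschitz. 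Barbalat's lemma then gives $\dot{\mathcal{H}}(t)\to0$, and, each summand being nonnegative, $\dot r_j(t)\to0$ and $r_j(t)\dot\theta_j(t)\to0$ for every $j$.

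The final step is to upgrade $r_j\dot\theta_j\to0$ to $\dot\theta_j\to0$. Here one uses a uniform positive lower bound on the amplitudes — precisely the anti-amplitude-death situation of Lemma~\ref{antideath}, so that $r_j(t)>r^*>0$ for large $t$: dividing $r_j\dot\theta_j\to0$ by $r_j$, or equivalently invoking the second line of \eqref{eq:polar-theta} through $|\dot\theta_j|\le (c/r_j)\sum_k a_{jk}\,r_k\,|\sin(\theta_k-\theta_j)|$ together with $r_j\dot\theta_j=c\sum_k a_{jk}\,r_k\sin(\theta_k-\theta_j)\to0$, yields $\dot\theta_j(t)\to0$.

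I expect the main obstacle to be the bookkeeping in the first step rather than the Barbalat machinery: one must recognize $\mathcal{I}$ as an exact derivative — which relies on the diffusive coupling contributing only the constant $-c\,d_j$ after the $|z_j|^2$-type terms are differentiated — and exploit the symmetry $a_{jk}=a_{kj}$ to write $\mathcal{II}$ as a pure increment. A secondary delicate point is the last step: passing from $r_j\dot\theta_j\to0$ to $\dot\theta_j\to0$ genuinely requires the amplitudes to remain bounded away from zero, not merely bounded above, so this part of the statement is naturally read under the standing no-amplitude-death hypothesis.
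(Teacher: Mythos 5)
Your proposal is correct and follows essentially the same route as the paper: the paper's proof (given in the text preceding the lemma) likewise recognizes $\mathcal{I}$ and the symmetrized $\mathcal{II}$ in \eqref{HN-sym} as exact derivatives, concludes that $\mathcal{H}$ is bounded and nondecreasing, and invokes uniform continuity of $\dot{\mathcal{H}}$ (Barbalat) to get $\dot r_j\to 0$ and $r_j\dot\theta_j\to 0$. Your closing observation — that upgrading $r_j\dot\theta_j\to 0$ to $\dot\theta_j\to 0$ needs a positive lower bound on the amplitudes, supplied by Lemma~\ref{antideath} rather than by the stated hypothesis $0<r_j<l$ — is accurate and is a point the paper passes over silently.
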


\section{Synchronization for general network topologies} \label{K}

In this section, we consider a general network topology given by a simple, undirected, and connected graph $G = (V,E), V = \{1,\dots,N\}$. The associated adjacency matrix $A = [a_{\ell k}]$ satisfies $a_{\ell k} \in \{0,1\}, a_{\ell\ell} = 0, a_{\ell k} = a_{k\ell}$ for all $\ell,k \in V$.

\begin{theorem}[Topology-Robust Synchronization]\label{thm:identical-complete}
Let \eqref{c star} and \eqref{initial R} hold. 
Assume that $\omega_\ell = \omega \in \mathbb{R}$ for all $\ell \in \{1,\ldots,N\}$. 
Let $z$ be a solution of \eqref{eq:SL-network} with initial condition satisfying 
\[
 0 < \theta_\ell(0) < \pi,
 \quad 
 \forall\, \ell \in \{1,\ldots,N\}.
\]
\textcolor{black}{Then the solution exists globally and the network achieves \emph{complete
synchronization}. More precisely, there exists a phase
$\vartheta_0\in(0,\pi)$ such that
\begin{equation}
\begin{aligned}
&\lim_{t\to\infty}r_\ell(t)=\sqrt{\mu},
\qquad
&&\lim_{t\to\infty}
|\dot r_\ell(t)-\dot r_k(t)|=0,
\\
&\lim_{t\to\infty}
|\dot\theta_\ell(t)-\dot\theta_k(t)|=0,
\qquad
&&\lim_{t\to\infty}
\operatorname{dist}_{\mathbb{S}^1}
\bigl(
    \theta_\ell(t)-\omega t,\vartheta_0
\bigr)=0,
\end{aligned}
\end{equation}
for all $\ell,k\in\{1,\ldots,N\}$.
Equivalently,
\begin{align*}
    \lim_{t\to\infty}
    \left|
        z_\ell(t)
        -
        \sqrt{\mu}\,
        e^{\im(\omega t+\vartheta_0)}
    \right|
    =0,
    \quad
    \ell=1,\ldots,N.
\end{align*}}
\textcolor{black}{Moreover, all the above convergences occur at an \emph{exponential rate}; hence the network attains} 
\emph{exponential complete synchronization}.
\end{theorem}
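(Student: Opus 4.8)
The plan is to leverage the two preparatory results already available. By Lemma~\ref{antideath}, conditions \eqref{c star} and \eqref{initial R} guarantee global existence and the eventual trapping $r^* < r_j(t) < \sqrt{\mu}$ for all $j$ and all $t>T$; in particular amplitude death is excluded and the polar system \eqref{eq:polar-theta} is equivalent to \eqref{eq:SL-network}. Working in the rotating frame we may set $\omega=0$, so Lemma~\ref{EF} applies and yields $\dot r_j(t)\to 0$ and $\dot\theta_j(t)\to 0$ for every $j$; this immediately gives $|\dot r_j-\dot r_k|\to 0$ and $|\dot\theta_j-\dot\theta_k|\to 0$. The substantive work is therefore to (i) promote $\dot r_j\to 0$ to $r_j\to\sqrt\mu$, (ii) show the phases collapse to a common value $\vartheta_0\in(0,\pi)$, and (iii) upgrade all of this convergence to an exponential rate.

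For the phase collapse I would first establish that the phase arc is non-expanding. Define $\theta_{\max}(t)=\max_j\theta_j(t)$ and $\theta_{\min}(t)=\min_j\theta_j(t)$ (using unwrapped angles as in Remark~\ref{rem:unwrapped}). As long as the spread $D(t):=\theta_{\max}(t)-\theta_{\min}(t)$ stays below $\pi$, the coupling term in the $\dot\theta_j$ equation at an index realizing the maximum is a sum of terms $c\,a_{jk}(r_k/r_j)\sin(\theta_k-\theta_j)$ with $\theta_k-\theta_j\in(-\pi,0]$, hence $\le 0$; by a Danskin/differential-inequality argument $\dot\theta_{\max}\le 0$ and symmetrically $\dot\theta_{\min}\ge 0$, so $D(t)$ is non-increasing and the initial condition $0<\theta_j(0)<\pi$ confines all phases to $(0,\pi)$ for all time. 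Thus $D(t)\downarrow D_\infty\ge 0$. If $D_\infty>0$, then along a subsequence the extremal gap is bounded below; combined with the uniform lower bound $r_j>r^*$ and upper bound $r_j<\sqrt\mu$ from Lemma~\ref{antideath}, the coupling sum at an extremal, connected-to-the-bulk index is bounded away from zero (connectivity of $G$ is used here: some edge crosses any nontrivial bipartition of the phase ordering), contradicting $\dot\theta_j\to 0$. Hence $D_\infty=0$ and all phases converge to a common limit $\vartheta_0\in[0,\pi]$; a short additional argument (the interior trapping is strict, or a LaSalle-type exclusion of the boundary where the configuration would be an unstable antiphase-type state) pins $\vartheta_0\in(0,\pi)$.

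With the phases asymptotically aligned, the off-diagonal cosines in the first line of \eqref{eq:polar-theta} tend to $1$ and the sines in the second line tend to $0$, so the amplitude subsystem is asymptotically autonomous with limiting equations $\dot r_j=(\mu-r_j^2)r_j+c\sum_k a_{jk}(r_k-r_j)$. On this limiting system the function $V(r)=\tfrac12\sum_j(r_j^2-\mu)^2$ (or, more simply, tracking $\bar r$ together with $\sum_j(r_j-\bar r)^2$) is a strict Lyapunov function whose only equilibrium in the trapping region $(r^*,\sqrt\mu)^N$ is the synchronized state $r_j\equiv\sqrt\mu$: the diffusive term drives amplitude differences to zero along $\ker L=\mathrm{span}\{\mathbf 1\}$, while $(\mu-r^2)r$ fixes the common value at $\sqrt\mu$. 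Invoking the theory of asymptotically autonomous systems (or redoing the estimate directly with the error terms, which are integrable by Lemma~\ref{EF}) gives $r_j(t)\to\sqrt\mu$. Finally, for the exponential rate I would linearize about the synchronized periodic state $z_j=\sqrt\mu\,e^{\im\vartheta_0}$ (equivalently, about $(r,\theta)=(\sqrt\mu\,\mathbf 1,\vartheta_0\mathbf 1)$): the amplitude block has Jacobian $-2\mu I - cL$ restricted to deviations, which is negative definite with spectral gap $2\mu+c\lambda_2(L)>0$ after quotienting the $U(1)$ direction in the phase block, where the Jacobian is $-cL$ with gap $c\lambda_2(L)>0$; since the nonlinear corrections are quadratically small and the trajectory has already entered a neighborhood of this hyperbolic (mod $U(1)$) state, standard linearized-stability / Grönwall estimates yield exponential convergence of $|r_j-\sqrt\mu|$, $|\dot r_j-\dot r_k|$, $|\dot\theta_j-\dot\theta_k|$, and $\operatorname{dist}_{\mathbb S^1}(\theta_j,\vartheta_0)$.

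The main obstacle I anticipate is step (ii)--(iii) rather than the Lyapunov bookkeeping: one must be careful that the phase-arc contraction argument survives the $r_k/r_j$ weights (handled by the two-sided bounds from Lemma~\ref{antideath}) and that $\vartheta_0$ is genuinely interior; and for the exponential rate one must confirm that the only neutral direction of the full linearization is the $U(1)$ phase-shift mode, so that convergence modulo rotation is exponential with the explicit gaps above, with the degree assumption (connectedness, $\lambda_2(L)>0$) doing the essential work.
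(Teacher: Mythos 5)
Your proposal follows essentially the same skeleton as the paper's proof for the phase part: the paper's Step~1 is exactly your phase-arc non-expansion argument (at an index realizing $\theta_{\max}$ every coupling term $c\,a_{jk}(r_k/r_j)\sin(\theta_k-\theta_j)$ is nonpositive, so $\theta_{\max}$ is nonincreasing and $\theta_{\min}$ nondecreasing, trapping all phases in $(0,\pi)$), and the paper's Step~2 is your collapse argument, implemented there by taking a Bolzano--Weierstrass subsequence of the bounded state and passing to the limit in the phase equation using Lemma~\ref{EF}; note that $\vartheta_0\in(0,\pi)$ then comes for free from the monotone bounds $\theta_{\min}(0)\le\phi_m=\phi_M\le\theta_{\max}(0)$, so your ``short additional argument'' is not needed. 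Your linearization sketch for the exponential rate (amplitude block $-2\mu I-cL$, phase block $-cL$ modulo the $U(1)$ direction) is more explicit than what the paper writes, which simply asserts the rate.

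The one step where you genuinely diverge, and where your argument has a soft spot, is the amplitude saturation. The function $V(r)=\tfrac12\sum_j(r_j^2-\mu)^2$ is not obviously decreasing on the trapping region: its coupling contribution is $-2c\,\langle f(r),Lr\rangle$ with $f(x)=x^3-\mu x$, and $\langle f(r),Lr\rangle=\tfrac12\sum_{j,k}a_{jk}\bigl(f(r_j)-f(r_k)\bigr)(r_j-r_k)$ can be negative because $f$ is decreasing on $(0,\sqrt{\mu/3})$ and the trapping interval $(r^*,\sqrt\mu)$ of Lemma~\ref{antideath} necessarily contains such points (the definition of $r^*$ forces $r^*<\sqrt{\mu/3}$). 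For the same reason, the diffusive term alone does not contract $\sum_j(r_j-\bar r)^2$ without a condition like $c\lambda_2(L)>2\mu$, which is not assumed and is in fact incompatible with the smallness condition \eqref{c star}. The paper avoids all of this with a squeeze: for $j_*$ realizing $\min_j r_j$ one gets $\dot r_{j_*}\ge(\mu-\mathfrak{R}(t)-r_{j_*}^2)r_{j_*}$ with $\mathfrak{R}(t):=c\sum_k a_{j_*k}(1-\cos(\theta_k-\theta_{j_*}))\to0$ by the already-established phase collapse, so $\min_j r_j\to\sqrt\mu$, while $\max_j r_j<\sqrt\mu$ by Lemma~\ref{antideath}. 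You should replace your Lyapunov/asymptotically-autonomous step with this differential inequality (or restrict your linearized contraction claim to a neighborhood of $\sqrt\mu\,\mathbf 1$, which the squeeze argument shows is eventually entered).
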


\begin{proof}
By Lemma~\ref{antideath}, we may restrict attention to system~\eqref{eq:polar-theta}.  
Passing to the co-rotating frame $\theta_\ell \mapsto \theta_\ell-\omega t$ gives
\begin{equation}\label{eq:polar-theta 2nd}
\left\{
\begin{aligned}
\dot r_\ell &= (\mu-r_\ell^2)r_\ell 
+ c\sum_{k=1}^N a_{\ell k}\!\left(r_k\cos(\theta_k-\theta_\ell)-r_\ell\right),\\
\dot\theta_\ell &= c\sum_{k=1}^N a_{\ell k}\frac{r_k}{r_\ell}\sin(\theta_k-\theta_\ell).
\end{aligned}
\right.
\end{equation}

\textcolor{black}{Define
\begin{align*}
    \theta_{\max}(t):=\max_{1\leq\ell\leq N}\theta_\ell(t),
    \quad
    \theta_{\min}(t):=\min_{1\leq\ell\leq N}\theta_\ell(t).
\end{align*}
Since they are respectively the maximum and minimum of finitely many
continuously differentiable functions, they are locally Lipschitz and
hence differentiable almost everywhere.} \\

\noindent\emph{Step 1: Invariant phase interval.}
{\color{black}
Let
\[
    T_*:=\sup\left\{
        T>0:
        0<\theta_{\min}(t)\leq\theta_{\max}(t)<\pi
        \ \text{for all }0\leq t<T
    \right\}.
\]
By the initial assumptions and continuity, \(T_*>0\). For almost every
\(t\in(0,T_*)\), define the active index sets
\[
    I_{\max}(t):=
    \{\ell:\theta_\ell(t)=\theta_{\max}(t)\},
    \qquad
    I_{\min}(t):=
    \{\ell:\theta_\ell(t)=\theta_{\min}(t)\}.
\]
At every point of differentiability of the extremal functions,
\[
    \dot\theta_{\max}(t)
    =
    \max_{\ell\in I_{\max}(t)}\dot\theta_\ell(t),
    \qquad
    \dot\theta_{\min}(t)
    =
    \min_{\ell\in I_{\min}(t)}\dot\theta_\ell(t).
\]
For each \(\ell\in I_{\max}(t)\), we have
\(-\pi<\theta_k(t)-\theta_\ell(t)\leq0\), and therefore
\[
    \dot\theta_\ell(t)
    =
    c\sum_{k=1}^{N}a_{\ell k}
    \frac{r_k}{r_\ell}
    \sin(\theta_k-\theta_\ell)
    \leq0.
\]
Thus \(\dot\theta_{\max}(t)\leq0\) almost everywhere on \((0,T_*)\).
Similarly, \(\dot\theta_{\min}(t)\geq0\) almost everywhere. Since the
extremal functions are locally Lipschitz, it follows that
\[
    0<\theta_{\min}(0)
    \leq\theta_{\min}(t)
    \leq\theta_{\max}(t)
    \leq\theta_{\max}(0)<\pi,
    \qquad 0\leq t<T_*.
\]
If \(T_*<\infty\), continuity gives the same strict bounds at \(T_*\),
so the interval can be extended beyond \(T_*\), a contradiction.
Consequently, \(T_*=\infty\), and
\begin{equation}\label{eq:angle-bounds}
    0<\theta_{\min}(t)\leq\theta_{\max}(t)<\pi,
    \qquad t\geq0.
\end{equation}
}

\noindent\emph{Step 2: Asymptotic alignment.}
By monotonicity, the limits
\[
\theta_{\max}(t)\downarrow\phi_M, \quad 
\theta_{\min}(t)\uparrow\phi_m,
\quad 0<\phi_m\le\phi_M<\pi
\]
exist.  Lemmas~\ref{antideath} and~\ref{EF} imply $|\dot\theta_\ell|\to0$ for all $\ell$.  
Selecting $t_n=n\in\mathbb{N}$, the sequence $\{(r(t_n),\theta(t_n))\}$ is bounded in $\mathbb{R}^{2N}$ by~\eqref{antideath bounds} and~\eqref{eq:angle-bounds}. \textcolor{black}{Hence, by the Bolzano--Weierstrass theorem,
there exists a subsequence, still denoted by $t_n$, such that $r(t_n)\to r^\infty$ and
$\theta(t_n)\to\theta^\infty$. Choose $j$ such that
\begin{align*}
    \theta_j^\infty
    =
    \max_{1\leq\ell\leq N}\theta_\ell^\infty.
\end{align*}
Since \(\dot\theta_j(t)\to0\), passing to the limit in the second
equation of \eqref{eq:polar-theta 2nd} gives
\begin{align*}
    0
    =
    c\sum_{k=1}^{N}a_{jk}
    \frac{r_k^\infty}{r_j^\infty}
    \sin(\theta_k^\infty-\theta_j^\infty).
\end{align*}
For every $k$, $-\pi
    <
    \theta_k^\infty-\theta_j^\infty
    \leq0$, and hence every term in the above sum is nonpositive. It follows that $\sin(\theta_k^\infty-\theta_j^\infty)=0$ for every neighbor \(k\) of \(j\). Therefore, $\theta_k^\infty=\theta_j^\infty$ for every neighbor $k$ of $j$. By the connectedness of the graph,
this equality propagates along graph paths, and hence
\[
    \theta_1^\infty=\cdots=\theta_N^\infty.
\]
Since
\[
    \min_\ell\theta_\ell^\infty=\phi_m,
    \quad
    \max_\ell\theta_\ell^\infty=\phi_M,
\]
we conclude that
\[
    \phi_m=\phi_M.
\]}

\noindent\emph{Step 3: Amplitude saturation.}
\color{black}
We now prove that
\[
    r_\ell(t)\longrightarrow\sqrt{\mu},
    \quad
    \ell=1,\ldots,N.
\]
Let \(t_n\to\infty\) be arbitrary. By \eqref{antideath bounds},
after passing to a subsequence, there exists
\(r^\infty\in(0,\infty)^N\) such that $r(t_n)\longrightarrow r^\infty$. By Step 2, $\theta_\ell(t_n)\longrightarrow\vartheta_0$
for all \(\ell\), and Lemma~\ref{EF} gives $\dot r_\ell(t_n)\longrightarrow0$. Passing to the limit in the first equation of
\eqref{eq:polar-theta 2nd}, we obtain
\[
    0
    =
    \bigl(\mu-(r_\ell^\infty)^2\bigr)r_\ell^\infty
    +
    c\sum_{k=1}^{N}a_{\ell k}
    \bigl(r_k^\infty-r_\ell^\infty\bigr),
    \qquad
    \ell=1,\ldots,N.
\]

Let
\[
    R_\infty:=\max_{1\leq\ell\leq N}r_\ell^\infty
\]
and choose \(j\) such that \(r_j^\infty=R_\infty\). Then
\begin{align*}
    \sum_{k=1}^{N}a_{jk}
    (r_k^\infty-R_\infty)\leq0,
\end{align*}
and hence $(\mu-R_\infty^2)R_\infty\geq0$. Since \(R_\infty>0\), it follows that $R_\infty\leq\sqrt{\mu}$.

Similarly, let
\[
    r_\infty:=\min_{1\leq\ell\leq N}r_\ell^\infty
\]
and choose \(j\) such that \(r_j^\infty=r_\infty\). Then
\[
    \sum_{k=1}^{N}a_{jk}
    (r_k^\infty-r_\infty)\geq0,
\]
and therefore $(\mu-r_\infty^2)r_\infty\leq0$. Since \(r_\infty>0\), we obtain $r_\infty\geq\sqrt{\mu}$. Consequently,
\[
    r_\infty=R_\infty=\sqrt{\mu},
\]
and hence
\[
    r_1^\infty=\cdots=r_N^\infty=\sqrt{\mu}.
\]
Since every convergent subsequence has the same limit, we conclude that
\[
    r_\ell(t)\longrightarrow\sqrt{\mu},
    \qquad
    \ell=1,\ldots,N.
\]
\noindent\emph{Step 4: Exponential convergence.}
Set
\[
    w_\ell(t):=e^{-\im\omega t}z_\ell(t),
    \qquad \ell=1,\ldots,N.
\]
Then \(w=(w_1,\ldots,w_N)\) satisfies
\[
    \dot w_\ell
    =
    (\mu-|w_\ell|^2)w_\ell-c(Lw)_\ell.
\]
The synchronized equilibria form the compact invariant manifold
\[
    \mathcal M
    :=
    \left\{
        \sqrt{\mu}\,e^{\im\vartheta}\mathbf 1:
        \vartheta\in\mathbb R/2\pi\mathbb Z
    \right\}
    \cong \mathbb S^1.
\]
By the global phase-shift symmetry, it suffices to linearize at
\(\sqrt{\mu}\mathbf 1\). Writing a perturbation as
\(a+\im b\), with \(a,b\in\mathbb R^N\), the linearized system is
\[
    \dot a=-(2\mu I+cL)a,
    \qquad
    \dot b=-cLb.
\]
Since the graph is connected,
\[
    0=\lambda_1(L)<\lambda_2(L).
\]
Consequently, the only zero eigenvalue is simple and corresponds to
the direction \(\im\mathbf 1\), which is tangent to \(\mathcal M\).
All eigenvalues normal to \(\mathcal M\) have real parts bounded above
by
\[
    -\gamma_0,
    \qquad
    \gamma_0:=\min\{2\mu,c\lambda_2(L)\}>0.
\]
Thus \(\mathcal M\) is a compact normally attracting invariant
manifold.

By the local stable-foliation theorem for normally hyperbolic invariant
manifolds; see, for example,
\cite[Section~4]{hirsch1977invariant}, there exists a neighborhood
\(\mathcal U\) of \(\mathcal M\) which is foliated by local stable
fibers. Trajectories on each stable fiber converge exponentially to
the corresponding base point in \(\mathcal M\).

Steps~2--3 show that
\[
    w(t)\longrightarrow
    w_\infty
    :=
    \sqrt{\mu}\,e^{\im\vartheta_0}\mathbf 1.
\]
Hence \(w(t)\in\mathcal U\) for all sufficiently large \(t\). The base
point of the stable fiber containing \(w(t)\) must be \(w_\infty\),
since \(w(t)\to w_\infty\). Therefore, for every
\(0<\gamma<\gamma_0\), there exists \(C>0\) such that
\[
    \|w(t)-w_\infty\|_{\mathbb C^N}
    \leq
    Ce^{-\gamma t},
    \qquad t\geq0,
\]
where \(C\) has been enlarged, if necessary, to include a bounded
initial time interval. Equivalently,
\[
    \max_{1\leq\ell\leq N}
    \left|
        z_\ell(t)
        -
        \sqrt{\mu}\,
        e^{\im(\omega t+\vartheta_0)}
    \right|
    \leq
    Ce^{-\gamma t}.
\]

It remains to verify the exponential convergence of the radial and
angular velocities. Denote the vector field in the co-rotating frame
by
\[
    F_\ell(w)
    :=
    (\mu-|w_\ell|^2)w_\ell-c(Lw)_\ell.
\]
Since \(F(w_\infty)=0\) and \(F\) is smooth, the preceding estimate
implies
\[
    |\dot w_\ell(t)|
    =
    |F_\ell(w(t))-F_\ell(w_\infty)|
    \leq
    Ce^{-\gamma t}.
\]
Moreover, since \(r_\ell(t)=|w_\ell(t)|\) remains uniformly bounded
away from zero, we have
\[
    \dot r_\ell
    =
    \frac{\operatorname{Re}
    \bigl(\overline{w_\ell}\dot w_\ell\bigr)}
    {|w_\ell|}
\]
and
\[
    \dot\theta_\ell-\omega
    =
    \frac{\operatorname{Im}
    \bigl(\overline{w_\ell}\dot w_\ell\bigr)}
    {|w_\ell|^2}.
\]
Therefore,
\[
    |\dot r_\ell(t)|
    +
    |\dot\theta_\ell(t)-\omega|
    \leq
    Ce^{-\gamma t},
\]
after enlarging \(C\) once more. In particular,
\[
    |\dot r_\ell(t)-\dot r_k(t)|
    +
    |\dot\theta_\ell(t)-\dot\theta_k(t)|
    \leq
    Ce^{-\gamma t}
\]
for every \(\ell,k\in\{1,\ldots,N\}\). Hence all convergences stated
in Theorem~\ref{thm:identical-complete} occur at an exponential rate.
The proof is complete.
\end{proof}
\color{black}

\begin{theorem}\label{cor:identical-complete}
Assume that $c>0$, $\omega_\ell=\omega\in\mathbb{R}$ for all $\ell\in\{1,\ldots,N\}$ and 
\begin{equation} \label{mu bigger than c d}
    \mu > c\,d_{\max}:= c\,\max_{1\le \ell\le N} \left(\sum_{k=1}^N a_{\ell k}\right).
\end{equation}
Let $z$ be a solution of \eqref{eq:SL-network} with initial condition satisfying
\[
  \textcolor{black}{r_{\ell}(0)>0}, \qquad 0<\theta_\ell(0)<\frac{\pi}{2}
    \quad 
\forall\, \ell\in\{1,\ldots,N\}.
\]
\textcolor{black}{Then the solution exists globally and the network achieves complete
synchronization. More precisely, there exists a phase
\(\vartheta^\ast\in(0,\frac{\pi}{2})\) such that
\begin{equation}
\begin{aligned}
&\lim_{t\to\infty}r_\ell(t)=\sqrt{\mu},
\qquad
&&\lim_{t\to\infty}
|\dot r_\ell(t)-\dot r_k(t)|=0,
\\
&\lim_{t\to\infty}
|\dot\theta_\ell(t)-\dot\theta_k(t)|=0,
\qquad
&&\lim_{t\to\infty}
\operatorname{dist}_{\mathbb{S}^1}
\bigl(
    \theta_\ell(t)-\omega t,\vartheta^\ast
\bigr)=0,
\end{aligned}
\end{equation}
for all \(\ell,k\in\{1,\ldots,N\}\).
Equivalently,
\begin{align*}
    \lim_{t\to\infty}
    \left|
        z_\ell(t)
        -
        \sqrt{\mu}\,
        e^{\im(\omega t+\vartheta^\ast)}
    \right|
    =0,
    \qquad
    \ell=1,\ldots,N.
\end{align*}} 
\textcolor{black}{Moreover, all the above convergences occur at an \emph{exponential rate}; hence the network attains} 
\emph{exponential complete synchronization}.
\end{theorem}

\begin{proof}
\color{black}
Let \(T_*>0\) be the maximal time such that
\[
    r_\ell(t)>0,
    \qquad
    t\in[0,T_*),\quad \ell=1,\ldots,N.
\]
On \([0,T_*)\), introduce the co-rotating phases
\[
    \varphi_\ell(t):=\theta_\ell(t)-\omega t.
\]
Following the argument of Step~1 in the proof of
Theorem~\ref{thm:identical-complete}, we obtain
\begin{equation}\label{eq:angle-bounds 2nd}
    0
    <
    \varphi_{\min}(0)
    \leq
    \varphi_{\min}(t)
    \leq
    \varphi_{\max}(t)
    \leq
    \varphi_{\max}(0)
    <
    \frac{\pi}{2},
    \qquad
    0\leq t<T_*.
\end{equation}
In particular,
\[
    \cos(\varphi_k(t)-\varphi_\ell(t))>0
\]
for all \(k,\ell\) and \(t\in[0,T_*)\).

Set
\[
    \alpha:=\mu-cd_{\max}>0.
\]
For every \(\ell=1,\ldots,N\), we have
\begin{align*}
    \dot r_\ell
    &=
    (\mu-r_\ell^2)r_\ell
    +
    c\sum_{k=1}^{N}a_{\ell k}
    \left(
        r_k\cos(\varphi_k-\varphi_\ell)-r_\ell
    \right)\\
    &\geq
    (\mu-cd_\ell-r_\ell^2)r_\ell\\
    &\geq
    (\alpha-r_\ell^2)r_\ell.
\end{align*}
By comparison with
\[
    \dot y=(\alpha-y^2)y,
    \qquad
    y(0)=r_\ell(0),
\]
we obtain
\[
    r_\ell(t)
    \geq
    \min\left\{
        r_\ell(0),\sqrt{\alpha}
    \right\},
    \qquad
    0\leq t<T_*.
\]
Consequently, with
\[
    m_0:=
    \min_{1\leq\ell\leq N}
    \min\left\{
        r_\ell(0),\sqrt{\mu-cd_{\max}}
    \right\},
\]
we have
\[
    r_\ell(t)\geq m_0>0,
    \qquad
    0\leq t<T_*.
\]

We next show that \(T_*=\infty\). Let
\[
    R^2(t):=\sum_{\ell=1}^{N}|z_\ell(t)|^2.
\]
Since \(L\) is positive semidefinite,
\[
    \frac12\frac{\mathrm d}{\mathrm dt}R^2
    \leq
    \mu R^2-\frac1N R^4.
\]
Hence \(R(t)\) remains bounded and the solution of
\eqref{eq:SL-network} exists globally. If \(T_*<\infty\), then
\[
    |z_\ell(T_*)|
    =
    \lim_{t\uparrow T_*}r_\ell(t)
    \geq m_0>0.
\]
Thus the polar representation can be continued beyond \(T_*\),
contradicting the maximality of \(T_*\). Therefore \(T_*=\infty\).

The preceding estimates provide constants \(0<m<M<\infty\) such that
\[
    m\leq r_\ell(t)\leq M,
    \qquad
    t\geq0,\quad \ell=1,\ldots,N.
\]
Hence Lemma~\ref{EF} applies in the co-rotating frame. The arguments in
Steps~2--4 of the proof of
Theorem~\ref{thm:identical-complete} now apply verbatim. Therefore,
there exists $\vartheta^\ast\in
    \left(0,\frac{\pi}{2}\right)$ such that
\begin{align*}
    r_\ell(t)\to\sqrt{\mu},
    \qquad
    \varphi_\ell(t)\to\vartheta^\ast,
\end{align*}
and all the convergences stated in the theorem occur at an exponential
rate.
\end{proof}

{\color{black} \section{Hopf bifurcation of synchronous and non-synchronous modes in ring networks} \label{Hopf}}

While the results in Section~\ref{K} describe synchronization under general network topologies, they do not address the {\color{black}bifurcation mechanisms at critical parameter values}. In this section, we focus on a broad class of ring-symmetric coupling structures and employ a Hopf bifurcation analysis to {\color{black}identify the onset of synchronous periodic solutions and characterize the non-synchronous critical modes}.

Our analysis has {\color{black} three} components. First, we {\color{black} analyze the linearization at the origin in the full system \eqref{SL-network-xy-N}. The ring structure gives a block-circulant Jacobian, and Fourier diagonalization decomposes it into spatial modes.} Second, we restrict system \eqref{SL-network-xy-N} to the synchronous manifold and show that {\color{black} the restricted dynamics undergoes a supercritical Hopf bifurcation at $\mu=0$, producing an exact branch of synchronous periodic solutions of the full system.  We then use the Fourier blocks to identify the critical parameter values and symmetry-induced multiplicities of the non-synchronous modes. Explicit formulas for the blocks $M_j$ are derived later in this section, with details provided in Appendix~\ref{sec:App C}.} {\color{black} Third, we examine the case $N=7$, $s=2$, in which paired non-synchronous critical modes yield exact rotating-wave solutions and, at the cubic order, a standing-wave pattern.}

{\color{black} Throughout this section, we assume $c>0$, identical nonzero natural frequencies, $\omega_\ell = \omega\neq 0$, $\ell=1,2,\dots,N$.  The common parameter $\mu\in\mathbb{R}$ is treated as the bifurcation parameter and is allowed to vary through $0$ and the critical values $\mu_j$ considered below; hence the standing assumption $\mu>0$ used in Section 3 is not imposed in the present bifurcation analysis. In the absence of coupling, all oscillators undergo a Hopf bifurcation at the same parameter value $\mu=0$. These uniform local parameters preserve the ring symmetry and the block-circulant structure of the linearization. Consequently, the distinct critical values $\mu_j$ reflect the spatial coupling modes. The index $\ell$ labels oscillator positions on the ring, while $j$ labels the Fourier blocks obtained from the diagonalization. Thus $z_\ell$ denotes the state of the $\ell$-th oscillator, whereas $M_j$, $\mu_j$, and $\lambda_j^\pm$ denote quantities associated with the $j$-th block. The block $M_1$ corresponds to the synchronous mode, while the blocks $M_j$, $j=2,\dots,N$, correspond to non-synchronous modes. In this terminology, a Fourier block $M_j$ is called critical at a given parameter value if its associated eigenvalues form a nonzero purely imaginary complex-conjugate pair. Additional notation used in the center-manifold calculation will be introduced when needed. The index $k$ is used as a summation index or as a neighboring oscillator index.}

\medskip
Writing {\color{black} $z_\ell=x_\ell+\im y_\ell$ } in \eqref{eq:SL-network}, we obtain
{\color{black}
\begin{equation}\label{SL-network-xy-N}
\left\{\begin{aligned}
\dot x_\ell &= \mu x_\ell - \omega y_\ell - x_\ell(x_\ell^2+y_\ell^2) + c\sum_{k=1}^N a_{\ell k}(x_k - x_\ell),\\
\dot y_\ell &= \omega x_\ell + \mu y_\ell - y_\ell(x_\ell^2+y_\ell^2) + c\sum_{k=1}^N a_{\ell k}(y_k - y_\ell),
\end{aligned}\right.
\end{equation}
}
for {\color{black}$\ell=1,2,\dots,N$}.

We consider the $s$-nearest-neighbor ring coupling defined by
\begin{equation}\label{r-nearest}
{\color{black} a_{\ell k}} =
\begin{cases}
1, & {\color{black} 1 \leq |\ell-k|_N \leq s}, \\
0, & \text{otherwise},
\end{cases}
\end{equation}
where
$$
{\color{black}|\ell-k|_N := \min\{|\ell-k|,\,N-|\ell-k|\} }
$$
denotes the distance on the ring under the closest distance convention \cite{gupta2014kuramoto}.

\begin{remark}
\leavevmode
The coupling \eqref{r-nearest} can equivalently be written as
$$
{\color{black}a_{\ell k}} =
\begin{cases}
1, & k = {\color{black} \ell \pm 1, \ell\pm 2,\dots, \ell\pm s }\pmod{N}, \\[2pt]
0, & \text{otherwise}.
\end{cases}
$$
{\color{black} In particular, $s=1$ gives nearest-neighbor coupling. When $s=N/2$ for even $N$, or $s=(N-1)/2$ for odd $N$, the coupling is all-to-all without self-coupling, namely,
$$
a_{\ell k} =
\begin{cases}
1, & k\neq {\color{black} \ell} ,\\[2pt]
0, & k ={\color{black} \ell} .
\end{cases}
$$
}
\end{remark}

{\color{black} To identify the critical modes}, we linearize system~\eqref{SL-network-xy-N} at the origin. This yields the $2N\times2N$ Jacobian matrix
\begin{equation}\label{linearization_block}
A =
\begin{bmatrix}
A_1 & A_2 & A_3 & \cdots & A_N \\
A_N & A_1 & A_2 & \cdots & A_{N-1} \\
\vdots & \vdots & & \ddots \\
A_2 & A_3 & A_4 & \cdots & A_1
\end{bmatrix},
\end{equation}
where each $A_k$ is a $2\times2$ matrix. Thus $A$ is a block-circulant matrix of type $(N,2)$, denoted by
$A=\mathrm{bcirc}(A_1,\dots,A_N)\in\mathscr{BC}_{N,2}$ \cite{davis1979circulant}.

\medskip
To exploit this structure, we introduce the discrete Fourier transform.

\begin{definition}
The Fourier matrix of order $n$ is defined by
$$
{\color{black} F_n := \frac{1}{\sqrt{n}}\bigl[w^{(p-1)(q-1)}\bigr]_{p,q=1}^n}, \ \text{where} \ w = e^{-2\pi \im / n}.
$$
This matrix is also known as the discrete Fourier transform (DFT) matrix.
\end{definition}

\begin{lemma}[{\textbf{Adapted from Theorem 5.6.4}\label{unitary_diagonalization} \textrm{\cite{davis1979circulant}}}]
A matrix $A \in \mathscr{BC}_{m,n}$ if and only if it can be expressed in the form
\begin{equation}\label{diagonalization}
A
= \bigl(F_m \otimes F_n\bigr)
  \operatorname{diag}\bigl(M_1, M_2, \dots, M_m\bigr)\,
  \bigl(F_m \otimes F_n\bigr)^{*},
\end{equation}
where
\begin{equation}\label{diagonalization_M}
\begin{bmatrix}
M_1\\
M_2\\
\vdots\\
M_m
\end{bmatrix}
= (\sqrt{m}\,F_m \otimes I_n)\,
\begin{bmatrix}
B_0\\
B_1\\
\vdots\\
B_{m-1}
\end{bmatrix},
\quad
B_{k-1}=F_n^*A_kF_n,\quad k=1,2,\dots,m,
\end{equation}
and each $M_k$ is a square matrix of order $n$.
\end{lemma}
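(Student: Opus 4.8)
\emph{Strategy.} The plan is to read \eqref{diagonalization} as the simultaneous unitary diagonalization of the commuting family of block-circulant matrices, splitting the ``if and only if'' into two elementary observations. First, a matrix $A\in\mathbb{C}^{mn\times mn}$, viewed as an $m\times m$ array of $n\times n$ blocks, lies in $\mathscr{BC}_{m,n}$ exactly when it commutes with $\Pi\otimes I_n$, where $\Pi$ denotes the $m\times m$ cyclic shift matrix. Second, $F_m$ is precisely the unitary that diagonalizes $\Pi$, while the tensor factor $F_n$ passes harmlessly through the $I_n$-slot; conjugating the commutation relation by $F_m\otimes F_n$ turns it into block-diagonality, which is the right-hand side of \eqref{diagonalization}. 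The explicit block formulas \eqref{diagonalization_M} then follow by expanding $A$ along the powers $\Pi^k$ and using the mixed-product rule $(X\otimes Y)(Z\otimes W)=XZ\otimes YW$ for Kronecker products.

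\emph{The equivalence.} I would first introduce the cyclic shift $\Pi$, with $(\Pi)_{j,k}=1$ if $k\equiv j+1\ (\mathrm{mod}\ m)$ and $0$ otherwise, and verify by a direct block computation that the $(i,j)$ block of $(\Pi\otimes I_n)\,A\,(\Pi\otimes I_n)^{*}$ equals the $(i+1,j+1)$ block of $A$ (indices read mod $m$). Since $\Pi\otimes I_n$ is unitary, $A$ commutes with it iff its $(i,j)$ block is invariant under $(i,j)\mapsto(i+1,j+1)$, i.e.\ iff the $(i,j)$ block depends only on $(j-i)\bmod m$; this is exactly the statement $A=\mathrm{bcirc}(A_1,\dots,A_m)\in\mathscr{BC}_{m,n}$ for some $n\times n$ blocks $A_1,\dots,A_m$. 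Next, the columns of $F_m^{*}$ are eigenvectors of $\Pi$, so $F_m\Pi F_m^{*}=\Lambda:=\mathrm{diag}(\zeta_1,\dots,\zeta_m)$, where $\zeta_1,\dots,\zeta_m$ are the $m$ pairwise distinct $m$-th roots of unity; combined with $F_nF_n^{*}=I_n$ and the mixed-product rule, this gives $(F_m\otimes F_n)(\Pi\otimes I_n)(F_m\otimes F_n)^{*}=\Lambda\otimes I_n=\mathrm{diag}(\zeta_1 I_n,\dots,\zeta_m I_n)$. Hence $A$ commutes with $\Pi\otimes I_n$ iff $\widetilde A:=(F_m\otimes F_n)\,A\,(F_m\otimes F_n)^{*}$ commutes with $\Lambda\otimes I_n$; and since the $\zeta_\ell$ are distinct, comparing the $(i,j)$ blocks of $(\Lambda\otimes I_n)\widetilde A$ and $\widetilde A(\Lambda\otimes I_n)$ forces the off-diagonal blocks of $\widetilde A$ to vanish, i.e.\ $\widetilde A=\mathrm{diag}(M_1,\dots,M_m)$. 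Reading this chain of equivalences in both directions proves \eqref{diagonalization}.

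\emph{The block formulas, and the main obstacle.} For \eqref{diagonalization_M}, I would write a block-circulant $A$ as the Kronecker sum $A=\sum_{k=0}^{m-1}\Pi^{k}\otimes A_{k+1}$, which is immediate since the $(i,j)$ entry of $\Pi^{k}$ is $1$ exactly when $j\equiv i+k\ (\mathrm{mod}\ m)$, then conjugate term by term using $F_m\Pi^{k}F_m^{*}=\Lambda^{k}$ and the mixed-product rule to obtain $\widetilde A=\sum_{k=0}^{m-1}\Lambda^{k}\otimes B_k$, with $B_k$ the unitary conjugate of $A_{k+1}$ recorded in \eqref{diagonalization_M}. Reading off the $\ell$-th diagonal block gives $M_\ell=\sum_{k=0}^{m-1}(\Lambda^{k})_{\ell\ell}\,B_k$, and recognizing the coefficient array $\bigl[(\Lambda^{k})_{\ell\ell}\bigr]_{\ell,k}$ as a scalar multiple of $F_m$ is precisely the stacked identity in \eqref{diagonalization_M}; the inverse assignment $(M_1,\dots,M_m)\mapsto(A_1,\dots,A_m)$ is then the inverse discrete Fourier transform followed by the inverse conjugation on the $n$-block, which is well defined because $F_m$ (hence $F_m\otimes F_n$) is unitary. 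No step is conceptually hard; the points that need care are the block-index shift under conjugation by $\Pi\otimes I_n$ and keeping the Fourier orientation convention consistent throughout---the placement of $F_n$ versus $F_n^{*}$ and the sign in the exponent of $w$---so that \eqref{diagonalization_M} is reproduced verbatim. Alternatively, one may simply invoke \cite{davis1979circulant}.
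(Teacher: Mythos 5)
Your proposal is correct in substance, but note that the paper does not prove this lemma at all: it is imported verbatim as Theorem 5.6.4 of \cite{davis1979circulant}, so there is no in-paper argument to compare against. What you supply is the standard self-contained proof — characterize $\mathscr{BC}_{m,n}$ as the commutant of the block shift $\Pi\otimes I_n$, diagonalize $\Pi$ by $F_m$, use the mixed-product rule to turn the commutation relation into block-diagonality (distinctness of the $m$-th roots of unity killing the off-diagonal blocks), and expand $A=\sum_{k=0}^{m-1}\Pi^k\otimes A_{k+1}$ to read off the $M_\ell$ as a discrete Fourier transform of the conjugated blocks. This is essentially Davis's own argument, and every step you outline checks out: the block-index shift $(i,j)\mapsto(i+1,j+1)$ under conjugation by $\Pi\otimes I_n$ is right, and both directions of the equivalence follow from your chain. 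The one place requiring genuine care is the one you flag yourself: with the paper's convention $A=(F_m\otimes F_n)^*\operatorname{diag}(M_1,\dots,M_m)(F_m\otimes F_n)$ and $w=e^{-2\pi\im/m}$, the raw computation gives $M_\ell=\sum_{k=0}^{m-1}w^{-(\ell-1)k}\,F_nA_{k+1}F_n^*$, which matches the stated $B_{k-1}=F_n^*A_kF_n$ and the coefficient matrix $\sqrt{m}\,F_m$ only after reconciling the orientation of the Fourier conjugation (equivalently, the direction of the circulant shift). Since you explicitly identify this bookkeeping as the only delicate point, and the structural argument is independent of the convention chosen, the proposal is sound; what it buys over the paper's bare citation is a verifiable derivation of \eqref{diagonalization_M}, which the paper then relies on heavily in Appendix~\ref{sec:App B}.
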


{\color{black} Since $F_m$ and $F_n$ are unitary, $F_m\otimes F_n$ is also unitary. Hence the unitary similarity in Lemma \ref{unitary_diagonalization} preserves the eigenvalues.}

Applying Lemma \ref{unitary_diagonalization} to \eqref{linearization_block}, the Jacobian matrix is unitarily similar to a block-diagonal matrix consisting of $N$ independent $2\times2$ blocks $M_1,M_2,\dots,M_N$. The block $M_1$ corresponds to the dynamics on the synchronous manifold, whereas the remaining blocks describe non-synchronous dynamics.

To investigate synchronous periodic solutions, we consider the synchronous manifold
$$
{\color{black} \mathcal{S}:=\{x_1=x_2=\cdots=x_N,\ y_1=y_2=\cdots=y_N\}. }
$$
{\color{black} Since every diffusive coupling term vanishes whenever all oscillator states coincide, $\mathcal{S}$ is invariant under the flow generated by system \eqref{SL-network-xy-N}. Writing $(x_\ell(t),y_\ell(t))\equiv (x(t),y(t))$ on $\mathcal S$ for all $\ell=1,2,\dots,N$, the induced dynamics are given by
}
\begin{equation}\label{SL-network-xy-N-synchronous}
\left\{\begin{aligned}
\dot x &= \mu x - \omega y - x(x^2+y^2) \\
\dot y &= \omega x + \mu y - y(x^2+y^2).
\end{aligned}\right.
\end{equation}
{\color{black} Consequently, every periodic solution of \eqref{SL-network-xy-N-synchronous} corresponds to a synchronous periodic solution of \eqref{SL-network-xy-N}. This type of synchronous manifold reduction has been used, for example, in \cite{chen2018segmentation, chen2021collective}.
}

\begin{theorem}\label{first bifurcation}
The reduced system \eqref{SL-network-xy-N-synchronous} undergoes a supercritical Hopf bifurcation at {\color{black}$(x,y)=(0,0)$ when $\mu=0$.}
Consequently, {\color{black} system \eqref{SL-network-xy-N} admits a branch of synchronous periodic solutions
$$
\begin{aligned}
x_1(t)&=x_2(t)=\cdots=x_N(t) =\sqrt{\mu}\,\cos(\omega t+\varphi_0),\\
y_1(t)&=y_2(t)=\cdots=y_N(t) =\sqrt{\mu}\,\sin(\omega t+\varphi_0),
\end{aligned}
$$
for $\mu>0$ sufficiently close to $0$, where $\varphi_0\in\mathbb{R}$ is arbitrary. This branch is orbitally asymptotically stable relative to the synchronous manifold $\mathcal{S}$.}
\end{theorem}

{\color{black}
The proof follows from the exact complex Stuart--Landau normal form on the synchronous manifold $\mathcal{S}$, together with the standard Hopf normal form coefficient calculation of Hassard--Kazarinoff--Wan \cite{hassard1981}; see Appendix \ref{app:HopfProof}.
}

The Hopf bifurcation at $\mu=0$ established in Theorem \ref{first bifurcation} is associated with the synchronous mode, namely the block $M_1$ in the decomposition \eqref{diagonalization}. We now return to the full Jacobian matrix \eqref{linearization_block} to identify additional critical parameter values arising from the remaining blocks.

{\color{black} More precisely, in the non-all-to-all cases, each block $M_j$, $j=2,3,\dots,N$, has a pair of purely imaginary eigenvalues $\pm\im\omega$ at a critical parameter value $\mu=\mu_j$.} A direct computation of $M_j$, carried out later in this section, shows that
\begin{equation}\label{mu_j}
\mu=\mu_j:=2c\Bigg(s-\frac{\sin\bigl[\frac{s(j-1)\pi}{N}\bigr]\,\cos\bigl[\frac{(s+1)(j-1)\pi}{N}\bigr]}{\sin\bigl[\frac{(j-1)\pi}{N}\bigr]}\Bigg),\quad j=2,3,\dots,N.
\end{equation}
Thus, {\color{black} when $\mu=\mu_j$, the $j$-th non-synchronous block $M_j$ contributes a pair of purely imaginary eigenvalues to the linearization. However, the ring symmetry may cause several blocks to become critical at the same value of $\mu$. Therefore, the eigenvalue information alone does not determine the local nonlinear dynamics near the origin. The derivation of \eqref{mu_j} and the corresponding multiplicity relations are discussed below.
}

\begin{proposition}[Symmetry of {\color{black} $\mu_j$}]\label{mu_symmetry} {\color{black} Assume that the coupling is not all-to-all; that is, either $N$ is even with $1\leq s\leq \frac{N}{2}-1$, or $N$ is odd with $1\leq s\leq \frac{N-1}{2}-1$.}
Let $\mu_j$ be defined by \eqref{mu_j} for $j=2,3,\dots,N$. Then
$$
\mu_j=\mu_{N+2-j}, \quad j=2,3,\dots,N.
$$
Moreover, the following statements hold:
\begin{itemize}
\item[(i)] If $N$ is even and $1\leq s\le \frac{N}{2}-1$, then there is a unique index $j^\ast=1+\frac{N}{2}$ such that $j^\ast=N+2-j^\ast$. The remaining indices are paired as
$$
\{2,3,\dots,N\}\setminus\{j^\ast\} =\bigsqcup_{j=2}^{\frac{N}{2}}\{j,N+2-j\}.
$$
\item[(ii)] If $N$ is odd and $1\le s\leq \frac{N-1}{2}-1$, then {\color{black}all} indices $\{2,3,\dots,N\}$ are partitioned into disjoint pairs
$$
\{2,3,\dots,N\}=\bigsqcup_{j=2}^{\frac{N+1}{2}}\{j,N+2-j\}.
$$
\end{itemize}
\end{proposition}

% In particular, in the all-to-all case, namely $s=\frac{N}{2}$ for even $N$ or $s=\frac{N-1}{2}$ for odd $N$, the expression \eqref{mu_j} simplifies to
% $$
% \mu_2=\mu_3=\cdots=\mu_N=Nc.
% $$

\begin{proof}
Using the identities
$$
\sin(\pi-\theta)=\sin\theta,\quad \cos(\pi-\theta)=-\cos\theta,
$$
and
$$
\sin(s\pi-\theta)=(-1)^{s+1}\sin\theta,\quad \cos\bigl((s+1)\pi-\theta\bigr)=(-1)^{s+1}\cos\theta,
$$
we compute
\begin{align*}
\frac{\sin\bigl[\tfrac{s(N+1-j)\pi}{N}\bigr]\cos\bigl[\tfrac{(s+1)(N+1-j)\pi}{N}\bigr]}
{\sin\bigl[\tfrac{(N+1-j)\pi}{N}\bigr]}
&=
\frac{\sin\bigl[s\pi-\tfrac{s(j-1)\pi}{N}\bigr]\cos\bigl[(s+1)\pi-\tfrac{(s+1)(j-1)\pi}{N}\bigr]}
{\sin\bigl[\pi-\tfrac{(j-1)\pi}{N}\bigr]}\\
&=
\frac{\sin\bigl[\tfrac{s(j-1)\pi}{N}\bigr]\cos\bigl[\tfrac{(s+1)(j-1)\pi}{N}\bigr]}
{\sin\bigl[\tfrac{(j-1)\pi}{N}\bigr]}.
\end{align*}
Substituting this identity into \eqref{mu_j} gives $\mu_{N+2-j}=\mu_j$. {\color{black} The stated pairings follow by solving $j=N+2-j$. If $N$ is even, the only solution is $j^\ast=1+\frac{N}{2}$, so this index is unpaired and all remaining indices form the stated pairs. If $N$ is odd, there is no integer solution, so all indices in $\{2,3,\dots,N\}$ are paired.}
\end{proof}

\begin{proposition}[Transversality of the critical eigenvalues]\label{transversality}
{\color{black} Under the same non-all-to-all assumption, for} each $j=2,3,\dots,N$, the eigenvalues of the block $M_j$ can be written as
$$
\lambda_j^{\pm}(\mu)=(\mu-\mu_j)\pm \im\omega,
$$
where $\mu_j$ is given by \eqref{mu_j}.
Then
$$
\Re\,\lambda_j^{\pm}(\mu_j)=0\quad\text{and}\quad\Re\,\bigl((\lambda_j^{\pm})'(\mu_j)\bigr)=1\neq 0.
$$
Hence, for each $j=2,3,\dots,N$, the pair $\lambda_j^{\pm}(\mu)$ crosses the imaginary axis transversely at $\mu=\mu_j$.
\end{proposition}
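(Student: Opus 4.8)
The plan is to establish the claimed form of the eigenvalues of $M_j$ directly from the explicit $2\times 2$ structure of the block, and then read off the transversality condition by differentiation. Since the paper has already announced that the full Jacobian $A$ in \eqref{linearization_block} is block-circulant of type $(N,2)$ and that Lemma~\ref{unitary_diagonalization} diagonalizes it into $2\times 2$ blocks $M_1,\dots,M_N$, the first step is to compute $M_j$ explicitly. Writing the linearization of \eqref{SL-network-xy-N} at the origin, each diagonal $2\times 2$ block carries the term $\mu I + \omega R$, where $R=\begin{bmatrix}0 & -1\\ 1 & 0\end{bmatrix}$, while the coupling contributes $-c L_{jk} I_2$ through the graph Laplacian $L=D-A$. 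Thus $A = (\mu I_N - cL)\otimes I_2 + \omega\, I_N \otimes R$ (up to the ordering convention of the Kronecker product used in the excerpt), so after Fourier diagonalization the $j$th block is $M_j = (\mu - c\,\hat L_j) I_2 + \omega R$, where $\hat L_j$ is the $j$th eigenvalue of the circulant Laplacian $L$ associated with the $s$-nearest-neighbor ring. A standard circulant computation gives $\hat L_j = 2\sum_{\ell=1}^{s}\bigl(1-\cos\tfrac{2\pi(j-1)\ell}{N}\bigr) = 2s - 2\sum_{\ell=1}^{s}\cos\tfrac{2\pi(j-1)\ell}{N}$, and summing the cosine geometric series (Dirichlet kernel) yields exactly $\hat L_j = 2\bigl(s - \tfrac{\sin[s(j-1)\pi/N]\cos[(s+1)(j-1)\pi/N]}{\sin[(j-1)\pi/N]}\bigr)$, so that $c\,\hat L_j = \mu_j$ with $\mu_j$ as in \eqref{mu_j}.

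With $M_j = (\mu-\mu_j) I_2 + \omega R$ in hand, the eigenvalues are immediate: $R$ has eigenvalues $\pm\im$, so $M_j$ has eigenvalues $\lambda_j^{\pm}(\mu) = (\mu-\mu_j) \pm \im\omega$, which is precisely the asserted formula. Evaluating at $\mu=\mu_j$ gives $\Re\lambda_j^{\pm}(\mu_j) = \mu_j-\mu_j = 0$, so the critical eigenvalues lie on the imaginary axis. Differentiating in $\mu$, $(\lambda_j^{\pm})'(\mu) = 1$ identically, hence $\Re\bigl((\lambda_j^{\pm})'(\mu_j)\bigr) = 1 \neq 0$, which is the transversality (nondegeneracy) condition. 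Since the real part of the eigenvalue pair moves with unit speed through zero as $\mu$ passes $\mu_j$, the pair crosses the imaginary axis transversely, completing the proof.

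The only genuinely substantive step is the circulant eigenvalue computation, i.e.\ verifying that the $j$th eigenvalue of the $s$-nearest-neighbor ring Laplacian equals $\mu_j/c$ with $\mu_j$ given by \eqref{mu_j}; this amounts to summing a finite cosine series via the Dirichlet kernel identity $\sum_{\ell=1}^{s}\cos(\ell\phi) = \tfrac{\sin(s\phi/2)\cos((s+1)\phi/2)}{\sin(\phi/2)}$ with $\phi = 2\pi(j-1)/N$, and handling the removable singularity at $j=1$ (where the formula must be read as its limit $\hat L_1=0$) separately. I expect this to be the main obstacle only in the bookkeeping sense — matching the index conventions of the Fourier matrix $F_N$ in Lemma~\ref{unitary_diagonalization} to the circulant structure of $L$, and confirming that the $\omega R$ term passes through the Fourier diagonalization unchanged (it does, since $I_N\otimes R$ commutes with $F_N\otimes I_2$). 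Everything else — the eigenvalues of $M_j$, their real parts at criticality, and the derivative — is a one-line consequence of the block being a scalar multiple of the identity plus a fixed rotation generator. In the write-up I would state the formula for $M_j$, cite the Dirichlet-kernel identity and the earlier circulant diagonalization, and then present the three displayed equalities.
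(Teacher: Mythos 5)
Your proposal is correct and follows essentially the same route as the paper: the full Jacobian is diagonalized by the discrete Fourier transform into $2\times2$ blocks of the form $(\mu-\mu_j)I_2$ plus $\omega$ times a rotation generator, with $\mu_j$ obtained from the circulant Laplacian eigenvalues via the Dirichlet-kernel cosine sum (exactly the computation carried out entrywise in Appendix~\ref{sec:App B} and Remark~\ref{cos-formula}), after which the eigenvalue formula and transversality are immediate. The only cosmetic discrepancy is that the paper applies the full conjugation by $F_N\otimes F_2$ from Lemma~\ref{unitary_diagonalization}, whose inner $F_2$ factor flips the sign of the skew-symmetric part of each block relative to your convention; this has no effect on the spectrum or on the conclusion.
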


\begin{remark}
Proposition \ref{transversality} provides a transversality condition for each individual {\color{black} block $M_j$. For the full Jacobian matrix, however, several blocks may become critical at the same value of $\mu$. Thus the multiplicity of the critical eigenvalues is determined by the coincidences among the values $\mu_j$, as described in Theorem~\ref{additional_hopf}.}
\end{remark}

% Whether $\mu=\mu_j$ corresponds to a simple Hopf bifurcation further depends on the multiplicity of the critical eigenvalues, that is, on how many blocks $M_k$ satisfy $\mu_k=\mu_j$, as described in Theorem \ref{additional_hopf}.

\begin{theorem}[Additional critical parameter values]\label{additional_hopf} Consider system \eqref{SL-network-xy-N} with $s$-nearest-neighbor ring coupling in the non-all-to-all cases:
$N$ is even with $1\leq s\leq \frac{N}{2}-1$, or $N$ is odd with $1\leq s\leq \frac{N-1}{2}-1$. Besides the Hopf bifurcation at $\mu=0$ associated with the block $M_1$, {\color{black} there are additional critical parameter values} $\mu=\mu_j$, $j=2,3,\dots,N$, at which the Jacobian matrix has purely imaginary eigenvalues $\pm\im\omega$ arising from the corresponding blocks $M_j$.

{\color{black} The symmetry relation in Proposition~\ref{mu_symmetry} implies the following simultaneous contributions from paired blocks.}
\begin{itemize}
\item[(i)] If $N$ is even and $j\neq j^\ast:=1+\frac{N}{2}$, then $j$ and $N+2-j$ form a distinct pair. Consequently, the blocks $M_j$ and $M_{N+2-j}$ simultaneously contribute two pairs of purely imaginary eigenvalues $\pm\im\omega$.
\item[(ii)] If $N$ is even and $j=j^\ast$, then $j^\ast$ is unpaired in the sense that $j^\ast=N+2-j^\ast$. In this case, the block $M_{j^\ast}$ contributes a single pair of purely imaginary eigenvalues $\pm\im\omega$ at $\mu=\mu_{j^\ast}$. {\color{black} If $\mu_{j^\ast}\neq \mu_k$ for all $k\neq j^\ast$, then this pair is the only purely imaginary eigenvalue pair present at $\mu=\mu_{j^\ast}$.}
\item[(iii)] If $N$ is odd, then for every $j=2,3,\dots,N$, the indices $j$ and $N+2-j$ form a pair. Hence, at $\mu=\mu_j$, the blocks $M_j$ and $M_{N+2-j}$ simultaneously contribute two pairs of purely imaginary eigenvalues $\pm\im\omega$.
\end{itemize}
\end{theorem}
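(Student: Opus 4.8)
The plan is to read off the spectrum of the $2N\times2N$ Jacobian at the origin from its block‑circulant diagonalization and then run an eigenvalue‑counting argument. First I would linearize \eqref{SL-network-xy-N} at $(0,0)$: since the $s$‑nearest‑neighbor adjacency matrix is circulant, the Jacobian has the form $A=\mathrm{bcirc}(A_1,\dots,A_N)$ with $A_1=\begin{bmatrix}\mu-2cs & -\omega\\ \omega & \mu-2cs\end{bmatrix}$, $A_{\ell+1}=A_{N+1-\ell}=cI_2$ for $\ell=1,\dots,s$, and all other $A_k=0$. Applying Lemma~\ref{unitary_diagonalization} with $n=2$ and $B_{k-1}=F_2^{*}A_kF_2$, and using that $F_2^{*}(cI_2)F_2=cI_2$, the blocks become $M_j=B_0+c\sum_{\ell=1}^{s}\bigl(w_N^{(j-1)\ell}+w_N^{-(j-1)\ell}\bigr)I_2=\begin{bmatrix}\mu-\mu_j & \omega\\ -\omega & \mu-\mu_j\end{bmatrix}$, where $\mu_j=2c\bigl(s-\sum_{\ell=1}^{s}\cos\frac{2\pi(j-1)\ell}{N}\bigr)$; the Dirichlet‑kernel identity $\sum_{\ell=1}^{s}\cos\ell\phi=\sin(s\phi/2)\cos((s+1)\phi/2)/\sin(\phi/2)$ then recovers \eqref{mu_j}. (This is the ``direct computation'' referenced in the text; for the theorem proper I may simply cite Proposition~\ref{transversality}, which already records $\mathrm{spec}(M_j)=\{(\mu-\mu_j)\pm\im\omega\}$.)

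Second, by Lemma~\ref{unitary_diagonalization} together with Remark~\ref{Fourier_unitary}, $A$ is \emph{unitarily} similar to $\operatorname{diag}(M_1,\dots,M_N)$, so $\mathrm{spec}(A)=\bigcup_{j=1}^{N}\mathrm{spec}(M_j)$ with algebraic multiplicities added, and any $M_j$ that contributes $\pm\im\omega$ equals $\begin{bmatrix}0&\omega\\-\omega&0\end{bmatrix}$, which is semisimple (here $\omega\neq0$ is part of the standing Hopf setting), so algebraic and geometric multiplicities agree. At $\mu=\mu_j$ the block $M_j$ has eigenvalues $\pm\im\omega$, which immediately yields the existence of the additional critical parameter values. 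For the multiplicity I would first note that in the non‑all‑to‑all regime $\mu_j>0$ for every $j\ge2$: indeed $\sum_{\ell=1}^{s}\cos\frac{2\pi(j-1)\ell}{N}<s$ strictly, because the $\ell=1$ term is already $<1$ whenever $N$ does not divide $j-1$. Hence the synchronous block $M_1$, whose only critical value is $\mu=0$ (Theorem~\ref{first bifurcation}), never collides with $\mu=\mu_j$, and the multiplicity of $\im\omega$ in $\mathrm{spec}(A)$ at $\mu=\mu_j$ equals $m(j):=\#\{k\in\{2,\dots,N\}:\mu_k=\mu_j\}$.

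Third, I would feed Proposition~\ref{mu_symmetry} into this count. Since $\mu_j=\mu_{N+2-j}$ always, we have $\{j,N+2-j\}\subseteq\{k:\mu_k=\mu_j\}$, so $m(j)\ge2$ whenever $j\neq N+2-j$, i.e.\ whenever $2j\neq N+2$. If $N$ is even and $j\neq j^{\ast}:=1+\tfrac N2$, then $2j\neq N+2$, so $M_j$ and $M_{N+2-j}$ each contribute a pair $\pm\im\omega$, $m(j)\ge2$, and the Hopf critical point is not simple — case (i). If $N$ is even and $j=j^{\ast}$, then $N+2-j^{\ast}=j^{\ast}$, the canonical pairing is trivial, and $m(j^{\ast})=1+\#\{k\neq j^{\ast}:\mu_k=\mu_{j^{\ast}}\}$, which is $1$ (simple) exactly when no other $\mu_k$ coincides — case (ii). If $N$ is odd, $2j=N+2$ has no integer solution, so $j\neq N+2-j$ for all $j\in\{2,\dots,N\}$, whence $m(j)\ge2$ and every additional Hopf critical point is not simple — case (iii).

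The argument is essentially bookkeeping, so there is no substantial analytic obstacle; the two points that require care are (a) the strict inequality $\mu_j>0$ for $j\ge2$ in the non‑all‑to‑all case, which is precisely what keeps the synchronous block $M_1$ out of the list of additional critical values, and (b) stating the multiplicities with the right degree of precision. For (b), beyond the guaranteed pair $\{j,N+2-j\}$ the theorem deliberately does not attempt to decide whether $\mu_j=\mu_k$ for other indices $k$: determining this would amount to a trigonometric/Diophantine coincidence question about the Dirichlet‑kernel values in \eqref{mu_j}, and the statement is phrased (unconditionally ``not simple'' in (i) and (iii); conditional in (ii)) so as not to require it. Thus the real work is organizing the case split around the involution $j\mapsto N+2-j$ and its fixed point, rather than any estimate.
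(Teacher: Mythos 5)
Your proposal is correct and follows essentially the same route as the paper: the block-circulant diagonalization of the Jacobian into the $2\times2$ blocks $M_j$ (computed explicitly in the text and Appendix~\ref{sec:App B}), the spectral identity \eqref{spectrum}, the positivity $\mu_j>0$ from Remark~\ref{cos-formula} to separate the additional critical values from the synchronous one at $\mu=0$, and the pairing structure of Proposition~\ref{mu_symmetry} to organize the multiplicity count around the involution $j\mapsto N+2-j$. Your explicit remarks on semisimplicity of the critical blocks and on the strictness of $\mu_j>0$ are accurate and consistent with the paper's argument.
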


% The multiplicity of these eigenvalues, and hence whether the corresponding Hopf critical point is simple or degenerate, depends on the parity of $N$ and on symmetry-induced coincidences among the values $\mu_j$.

{\color{black} Theorem \ref{additional_hopf} identifies the additional critical parameter values and describes the symmetry-forced multiplicity of the purely imaginary eigenvalue pairs arising from the corresponding blocks. It does not by itself determine the existence or stability of non-synchronous branches in system \eqref{SL-network-xy-N}. We next illustrate the paired non-synchronous case through $N=7$ and $s=2$.

}

%\sout{}
{\color{black} 
\medskip

\begin{example}[The case $N=7$ and $s=2$]\label{ex:N7s2}
We illustrate, up to cubic order on the center manifold, the rotating-wave and standing-wave patterns associated with the paired non-synchronous critical modes. The calculation follows the center manifold and Hopf normal form approach. For center manifold reductions, see Carr \cite{carr1981applications}; for Hopf normal forms, see Hassard--Kazarinoff--Wan \cite{hassard1981}; and for rotating and standing waves in systems with spatial symmetry, see Golubitsky et al. \cite{golubitsky1988singularities}.

For the oscillator positions, define
\begin{equation}\label{spatial-phase}
\phi_{\ell}:=\frac{2\pi}{7}(\ell-1),\quad \ell=1,\ldots,7.
\end{equation}
Here $\ell$ labels the oscillator position. 

We first consider the paired modes $j=2$ and $j=7$, corresponding to the blocks $M_2$ and $M_7$. The corresponding critical values coincide:
\begin{equation}\label{critical_2}
\mu_2=\mu_7=2c\left[2-\cos\left(\frac{2\pi}{7}\right)-\cos\left(\frac{4\pi}{7}\right)\right].
\end{equation}
At $\mu=\mu_2$, the critical eigenspace is spanned by the vectors whose $\ell$-th components are $e^{\im\phi_\ell}$ and $e^{-\im\phi_\ell}$. Hence the critical component can be written as
$$
P_\ell=u_2e^{\im\phi_\ell}+u_7e^{-\im\phi_\ell}.
$$
On the center manifold, write
$$
z_\ell=P_\ell+H_\ell,
$$
where $H_\ell$ contains only the non-critical components. More explicitly,
$$
H_\ell=h_1+h_3e^{2\im\phi_\ell}+h_4e^{3\im\phi_\ell}+h_5e^{-3\im\phi_\ell}+h_6e^{-2\im\phi_\ell}.
$$
The coefficients $h_1,h_3,h_4,h_5,h_6$ are functions of $u_2,u_7,\overline{u_2},\overline{u_7}$ and $\mu-\mu_2$. Since the center manifold passes through the origin, these coefficients vanish when $u_2=u_7=0$, and hence they have no constant terms. Since the center manifold is tangent to the center subspace spanned by $e^{\im\phi_\ell}$ and $e^{-\im\phi_\ell}$, $H_\ell$ has no linear terms in the critical amplitudes. Finally, because the vector field in \eqref{eq:SL-network} has no quadratic terms, no quadratic terms appear in $H_\ell$. Therefore, $H_\ell$ starts at cubic order in $u_2,u_7,\overline{u_2},\overline{u_7}$. A direct expansion gives
\begin{equation}\label{cubic_expand_appendix}
\begin{aligned}
|P_\ell|^2P_\ell
={}&(|u_2|^2+2|u_7|^2)u_2e^{\im\phi_\ell}+(2|u_2|^2+|u_7|^2)u_7e^{-\im\phi_\ell} \\
&+u_2^2\overline{u_7}e^{3\im\phi_\ell}+u_7^2\overline{u_2}e^{-3\im\phi_\ell}.
\end{aligned}
\end{equation}
Thus, at cubic order, only the non-critical spatial components $e^{3\im\phi_\ell}$ and $e^{-3\im\phi_\ell}$ are forced, whereas the components $1$, $e^{2\im\phi_\ell}$, and $e^{-2\im\phi_\ell}$ are not. Since $H_\ell$ starts at cubic order, any term involving $H_\ell$ in the cubic nonlinearity is of fifth-order or higher. Therefore,
$$
h_1,h_3,h_6=O((|u_2|+|u_7|)^5),
$$
and
$$
H_\ell=h_4e^{3\im\phi_\ell}+h_5e^{-3\im\phi_\ell}+O((|u_2|+|u_7|)^5).
$$

We next determine the cubic parts of $h_4$ and $h_5$. For
$N=7$ and $s=2$, the coupling term is
$$
c\sum_{k=1}^{7}a_{\ell k}(z_k-z_\ell)= c(z_{\ell-2}+z_{\ell-1}+z_{\ell+1}+z_{\ell+2}-4z_\ell),
$$
where the indices are understood modulo $7$. Since, modulo $2\pi$,
$$
\phi_{\ell+m}=\phi_\ell+\frac{2m\pi}{7},\quad m=\pm1,\pm2,
$$
we obtain
\begin{equation*}
\begin{aligned}
z_{\ell+m}
={}&u_2e^{\im\phi_\ell}e^{\im\frac{2m\pi}{7}}+u_7e^{-\im\phi_\ell}e^{-\im\frac{2m\pi}{7}} \\
&+h_4e^{3\im\phi_{\ell}}e^{\im\frac{6m\pi}{7}}+h_5e^{-3\im\phi_{\ell}}e^{-\im\frac{6m\pi}{7}}+O((|u_2|+|u_7|)^5).
\end{aligned}
\end{equation*}
Substituting this expression for $m=-2,-1,1,2$ into the coupling term gives
\begin{equation*}
\begin{aligned}
c\sum_{k=1}^{7}a_{\ell k}(z_k-z_\ell)
&=-2c\left[2-\cos\left(\frac{2\pi}{7}\right)-\cos\left(\frac{4\pi}{7}\right)\right]\left(u_2e^{\im\phi_\ell}+u_7e^{-\im\phi_\ell}
\right)  \\
&\quad -2c\left[2-\cos\left(\frac{6\pi}{7}\right)-\cos\left(\frac{12\pi}{7}\right)\right]\left(h_4e^{3\im\phi_\ell}+h_5e^{-3\im\phi_\ell}\right)  \\
&\quad+O((|u_2|+|u_7|)^5).
\end{aligned}
\end{equation*}
The spatial components $e^{3\im\phi_\ell}$ and $e^{-3\im\phi_\ell}$ are associated with the critical value
\begin{equation}\label{critical_4}
\mu_4=\mu_5=2c\left[2-\cos\left(\frac{6\pi}{7}\right)-\cos\left(\frac{12\pi}{7}\right)\right].
\end{equation}
Using \eqref{critical_2} and \eqref{critical_4}, the coupling term becomes
\begin{equation}\label{coupling_mu_appendix}
\begin{aligned}
c\sum_{k=1}^{7}a_{\ell k}(z_k-z_\ell)
={}&-\mu_2\left(u_2e^{\im\phi_\ell}+u_7e^{-\im\phi_\ell}\right) \\
&-\mu_4\left(h_4e^{3\im\phi_\ell}+h_5e^{-3\im\phi_\ell}\right)+O((|u_2|+|u_7|)^5).
\end{aligned}
\end{equation}
Since $H_\ell$ starts at cubic order, we also have
$$
|z_\ell|^2z_\ell=|P_\ell|^2P_\ell+O((|u_2|+|u_7|)^5).
$$
Substituting \eqref{coupling_mu_appendix} and the preceding expansion of $|z_\ell|^2z_\ell$ into the full system, and comparing the coefficients of $e^{3\im\phi_\ell}$ and $e^{-3\im\phi_\ell}$, gives
\begin{equation}\label{B45_equations_appendix}
\left\{
\begin{aligned}
\dot{h}_4&=(\mu-\mu_4+\im\omega)h_4-u_2^2\overline{u_7}+O((|u_2|+|u_7|)^5) \\
\dot{h}_5&=(\mu-\mu_4+\im\omega)h_5-u_7^2\overline{u_2}+O((|u_2|+|u_7|)^5).
\end{aligned}
\right.
\end{equation}

We first determine the cubic part of $h_4$. Since the cubic forcing in the first equation of \eqref{B45_equations_appendix} is $-u_2^2\overline{u_7}$, write
\begin{equation}\label{B4_expression}
h_4=\alpha u_2^2\overline{u_7}+O((|u_2|+|u_7|)^5),
\end{equation}
where $\alpha$ depends smoothly on $\mu-\mu_2$. Using the linear parts of the critical amplitude equations,
$$
\dot u_2=(\mu-\mu_2+\im\omega)u_2+O((|u_2|+|u_7|)^3),
$$
and
$$
\dot{\overline{u_7}}=(\mu-\mu_2-\im\omega)\overline{u_7}+O((|u_2|+|u_7|)^3),
$$
we obtain
\begin{equation}\label{A2A7_derivative_appendix}
\frac{d}{dt}\left(u_2^2\overline{u_7}\right)=\left[3(\mu-\mu_2)+\im\omega\right]u_2^2\overline{u_7}+O((|u_2|+|u_7|)^5).
\end{equation}
Substituting \eqref{B4_expression} into \eqref{B45_equations_appendix}, and using \eqref{A2A7_derivative_appendix}, gives
$$
\alpha\left[3(\mu-\mu_2)+\im\omega\right]=(\mu-\mu_4+\im\omega)\alpha-1.
$$
Thus
$$
\alpha=\frac{1}{\mu_2-\mu_4-2(\mu-\mu_2)}.
$$
Therefore
\begin{equation}\label{B45_main}
h_4=\frac{u_2^2\overline{u_7}}{\mu_2-\mu_4-2(\mu-\mu_2)}+O((|u_2|+|u_7|)^5).
\end{equation}
The computation for $h_5$ is identical and gives
\begin{equation}\label{B54_main}
h_5=\frac{u_7^2\overline{u_2}}{\mu_2-\mu_4-2(\mu-\mu_2)}+O((|u_2|+|u_7|)^5).
\end{equation}
For $c>0$, we have $\mu_2\neq\mu_4$. Hence the denominators in \eqref{B45_main} and \eqref{B54_main} are nonzero when $\mu$ is sufficiently close to $\mu_2$.

Combining these expressions, the center-manifold expansion is
\begin{equation}\label{cm_expansion_27_main}
\begin{aligned}
z_\ell
={}&u_2e^{\im\phi_\ell}+u_7e^{-\im\phi_\ell} + \frac{u_2^2\overline{u_7}}{\mu_2-\mu_4-2(\mu-\mu_2)}e^{3\im\phi_\ell}  \\
&+\frac{u_7^2\overline{u_2}}{\mu_2-\mu_4-2(\mu-\mu_2)}e^{-3\im\phi_\ell}+O((|u_2|+|u_7|)^5).
\end{aligned}
\end{equation}
Since $H_\ell=O((|u_2|+|u_7|)^3)$, all terms involving $H_\ell$ in the cubic nonlinearity are of fifth order or higher. Therefore, up to cubic order, comparing the coefficients of $e^{\im\phi_\ell}$ and $e^{-\im\phi_\ell}$ gives
\begin{equation}\label{A_27_main}
\left\{
\begin{aligned}
\dot u_2&=(\mu-\mu_2+\im\omega)u_2-(|u_2|^2+2|u_7|^2)u_2+O((|u_2|+|u_7|)^5),\\
\dot u_7&=(\mu-\mu_2+\im\omega)u_7-(2|u_2|^2+|u_7|^2)u_7+O((|u_2|+|u_7|)^5).
\end{aligned}
\right.
\end{equation}
Ignoring the fifth-order terms and setting
$$
\rho_2=|u_2|^2,\quad \rho_7=|u_7|^2,
$$
we obtain
\begin{equation}\label{rho_27_main}
\left\{
\begin{aligned}
\dot\rho_2&=2\rho_2(\mu-\mu_2-\rho_2-2\rho_7),\\
\dot\rho_7&=2\rho_7(\mu-\mu_2-2\rho_2-\rho_7).
\end{aligned}
\right.
\end{equation}
For $\mu>\mu_2$, \eqref{rho_27_main} has four equilibria:
$$
(0,0),\quad(\mu-\mu_2,0),\quad(0,\mu-\mu_2),\quad\left(\frac{\mu-\mu_2}{3},\frac{\mu-\mu_2}{3}\right).
$$
Here and below, stability refers to the cubic-order squared-amplitude system \eqref{rho_27_main}. The origin is unstable, with eigenvalues
$$
2(\mu-\mu_2),\quad 2(\mu-\mu_2).
$$
At each of the two equilibria with one zero component, the two
eigenvalues of the linearization are
$$
-2(\mu-\mu_2),\quad-2(\mu-\mu_2).
$$
Thus these two equilibria are stable in the reduced $(\rho_2,\rho_7)$ equations. At the equilibrium with both
components nonzero, the eigenvalues are
$$
-2(\mu-\mu_2),\quad \frac{2}{3}(\mu-\mu_2).
$$
Hence this equilibrium is a saddle.

The two stable equilibria with one zero component give the rotating waves
\begin{equation}\label{rotating_27_main}
z_\ell(t)=\sqrt{\mu-\mu_2}\,e^{\im(\omega t+\varphi_2^0+\phi_\ell)},\quad z_\ell(t)=\sqrt{\mu-\mu_2}\,e^{\im(\omega t+\varphi_7^0-\phi_\ell)},
\end{equation}
where $\varphi_2^0,\varphi_7^0\in\mathbb{R}$ are arbitrary phases. These are exact solutions of system \eqref{eq:SL-network}. Indeed, each solution contains only one spatial factor, which is preserved by both the coupling term and the cubic nonlinearity.

We next consider the equilibrium for which both amplitudes are nonzero. For the cubic truncation of \eqref{A_27_main}, the corresponding solution is
$$
u_2(t)=\sqrt{\frac{\mu-\mu_2}{3}}\,e^{\im(\omega t+\varphi_2^0)},\quad u_7(t)=\sqrt{\frac{\mu-\mu_2}{3}}\,e^{\im(\omega t+\varphi_7^0)}.
$$
At cubic order, both critical amplitudes rotate with angular frequency $\omega$. Hence, their relative phase $\varphi_2^0-\varphi_7^0$ remains constant, but its value is not selected by the reduced equations. Substituting these expressions into \eqref{cm_expansion_27_main}, we obtain, up to cubic order in the center-manifold calculation,
\begin{equation}\label{standing_27_main}
\begin{aligned}
z_\ell(t)={}&2\sqrt{\frac{\mu-\mu_2}{3}}\,\cos\left(\phi_\ell+\frac{\varphi_2^0-\varphi_7^0}{2}\right)e^{\im\left(\omega t+\frac{\varphi_2^0+\varphi_7^0}{2}\right)}
\\
&+\frac{2\left(\sqrt{\displaystyle\frac{\mu-\mu_2}{3}}\right)^3}{\mu_2-\mu_4-2(\mu-\mu_2)}\cos\left[3\left(\phi_\ell+\frac{\varphi_2^0-\varphi_7^0}{2}\right)\right]e^{\im\left(\omega t+\frac{\varphi_2^0+\varphi_7^0}{2}\right)}.
\end{aligned}
\end{equation}
Formula \eqref{standing_27_main} describes the standing-wave pattern obtained from the cubic truncation; the expression is not claimed to be an exact solution of the full system. The first term in \eqref{standing_27_main} contains the two critical spatial components associated with $M_2$ and $M_7$, whereas the second term is the cubic-order correction in the non-critical components associated with $M_4$ and $M_5$. Thus, unlike the rotating waves, this standing-wave pattern cannot be represented exactly by the two critical spatial components alone. Higher-order terms may produce further corrections, but the first non-critical correction appears in $M_4$ and $M_5$ at cubic order.

\color{black}
The remaining non-synchronous pairs are obtained in the same way. For the pair $j=3,6$, the spatial phases are $2\phi_\ell$ and $-2\phi_\ell$, and
\begin{equation}\label{critical_3}
\mu_3=\mu_6=2c\left[2-\cos\left(\frac{4\pi}{7}\right)-\cos\left(\frac{8\pi}{7}\right)\right].
\end{equation}
For the pair $j=4,5$, the spatial phases are $3\phi_\ell$ and $-3\phi_\ell$, and the corresponding critical value $\mu_4=\mu_5$ is given in \eqref{critical_4}.

\medskip
In summary, for $N=7$ and $s=2$, the non-synchronous paired modes are
$$
(2,7),\,(3,6),\,(4,5).
$$
The pair $j=2,7$ is described by \eqref{A_27_main} and \eqref{rho_27_main}. For the pairs $j=3,6$ and $j=4,5$, the same form of the cubic-order critical amplitude equations is obtained after replacing the spatial factors $e^{\pm\im\phi_\ell}$ by $e^{\pm2\im\phi_\ell}$ or $e^{\pm3\im\phi_\ell}$, respectively, and by replacing $\mu_2$ by the corresponding critical values $\mu_3$ or $\mu_4$. Equivalently, in the corresponding squared-amplitude equations, one uses
$$
(\rho_3,\rho_6)=(|u_3|^2,|u_6|^2),\quad(\rho_4,\rho_5)=(|u_4|^2,|u_5|^2).
$$
For each pair, the cubic-order squared-amplitude system has two stable equilibria corresponding to the rotating waves and one saddle equilibrium with both amplitudes nonzero, corresponding to the standing-wave pattern. The rotating waves are exact solutions of system \eqref{eq:SL-network}, because a single spatial factor is preserved by both the coupling and the cubic nonlinearity. The standing-wave patterns, however, receive cubic-order corrections in non-critical spatial components. For the pairs $(2,7)$, $(3,6)$, and $(4,5)$, these corrections occur in the components associated with $(M_4,M_5)$, $(M_2,M_7)$, and $(M_3,M_6)$, respectively.

For $c>0$, the critical values are ordered as
$$
\mu_2=\mu_7<\mu_4=\mu_5<\mu_3=\mu_6.
$$
\end{example}

\begin{remark}
The case $N=6$, $s=2$ has higher critical multiplicity. At the first non-synchronous critical value one has $\mu_2=\mu_4=\mu_6=4c$, so three critical amplitudes must be treated simultaneously. We therefore use the case $N=7$, $s=2$ as the representative paired-mode example.
\end{remark}

\medskip
\noindent\textbf{Numerical illustration for $N=7$, $s=2$.}
We now supplement Example \ref{ex:N7s2} with numerical illustrations. The first non-synchronous critical value is shared by the modes $j=2$ and $j=7$, since $\mu_2=\mu_7$. A one-parameter bifurcation diagram does not show how the two critical amplitudes interact. We therefore first plot the leading-order squared-amplitude system \eqref{rho_27_main} in the $(\rho_2,\rho_7)$-plane. We then compare simulations restricted to the two rotating-wave invariant subspaces with simulations of the full system \eqref{SL-network-xy-N}. We take $c = 0.05$ and $\omega = 1$. For $N=7$ and $s=2$, the relevant critical values are
$$
\mu_2=\mu_7\approx 0.159903,\quad\mu_4=\mu_5\approx 0.227748,\quad \mu_3=\mu_6\approx 0.312349.
$$
For the figures below, we choose $\mu=\mu_2+0.02\approx 0.179903$, which lies above $\mu_2=\mu_7$ and below the next non-synchronous critical value $\mu_4=\mu_5$.

}

\begin{figure}[htbp]  
\includegraphics[width=0.55\textwidth]{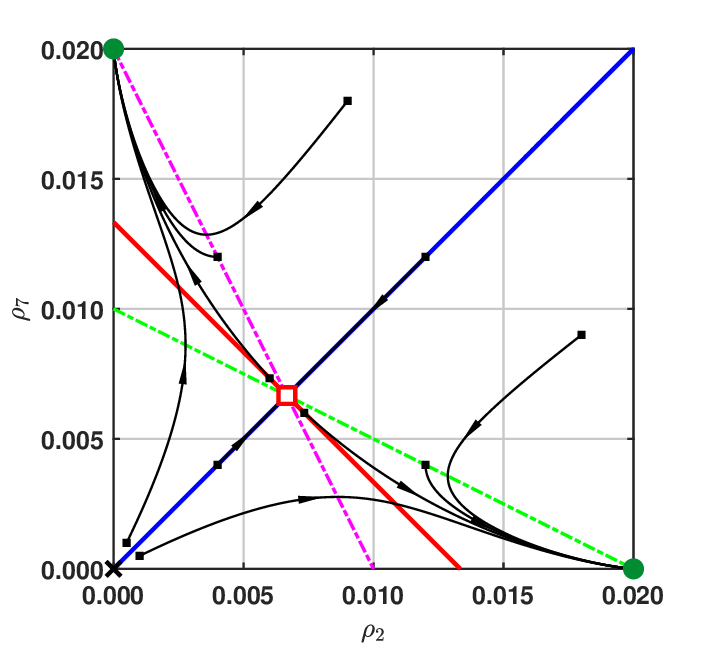}
\caption{Phase portrait of the leading-order squared-amplitude system \eqref{rho_27_main} in the $(\rho_2,\rho_7)$-plane for $N=7$, $s=2$, $c=0.05$, and $\mu=\mu_2+0.02\approx 0.179903$. The green circles denote the two equilibria corresponding to the $j=2$ and $j=7$ rotating waves, the red square denotes the standing-wave saddle, and the black cross denotes the unstable origin. The blue and red solid lines represent the one-dimensional stable and unstable eigenspaces of the linearization at the saddle, respectively. The green and magenta dash-dotted curves are the nontrivial nullclines $\dot\rho_2=0$ and $\dot\rho_7=0$, respectively. The black curves show representative trajectories, and the black squares denote their initial points.}
\label{rho27_phase}
\end{figure}

{\color{black} Figure \ref{rho27_phase} shows the four equilibria of the leading-order squared-amplitude system. The two boundary equilibria $(\mu-\mu_2,0)$ and $(0,\mu-\mu_2)$ correspond to the $j=2$ and $j=7$ rotating waves, respectively, whereas the interior equilibrium $((\mu-\mu_2)/3,(\mu-\mu_2)/3)$ is the standing-wave saddle. For the representative initial points shown in the figure, the reduced trajectories approach one of the two rotating-wave equilibria.

For the full system \eqref{SL-network-xy-N}, we use the complex variables $z_\ell(t)$, $\ell=1,2,\dots,7$. The initial amplitude is chosen as
$$
r_0=0.65\sqrt{\mu-\mu_2}=0.65\sqrt{0.02}\approx 0.091924.
$$
We consider two rotating-wave phase patterns. For the $j=2$ pattern,
\begin{equation}\label{j=2-initial}
    z_\ell(0)=r_0 e^{\im\phi_\ell},\quad \ell = 1,2,\dots,7,
\end{equation}
whereas for the $j=7$ pattern,
\begin{equation}\label{j=7-initial}
    z_\ell(0)=r_0 e^{-\im\phi_\ell}, \quad \ell = 1,2,\dots,7.
\end{equation}
Here $\phi_\ell$ is defined in \eqref{spatial-phase}.

We perform two numerical computations. First, we restrict the dynamics to the corresponding rotating-wave invariant subspaces. The resulting $j=2$ and $j=7$ rotating waves are shown in Figure \ref{N7_s2_rotating_waves_j2_j7_restricted}.

Second, we solve the full system \eqref{SL-network-xy-N} without restricting the solution to a rotating-wave pattern. Starting from the same $j=2$ and $j=7$ phase patterns with amplitude $r_0$, we add a small random complex perturbation of scale $10^{-10}$ to each initial value $z_\ell(0)$. Thus, the initial conditions are no longer confined to the corresponding rotating-wave invariant subspaces. As shown in Figure \ref{N7_s2_rotating_waves_j2_j7_full}, both solutions eventually approach the synchronous periodic solution. For these parameter values and initial conditions, the unrestricted trajectories therefore approach synchrony rather than the rotating-wave solutions.

\begin{figure}[htbp]       
\includegraphics[width=1.0\columnwidth]{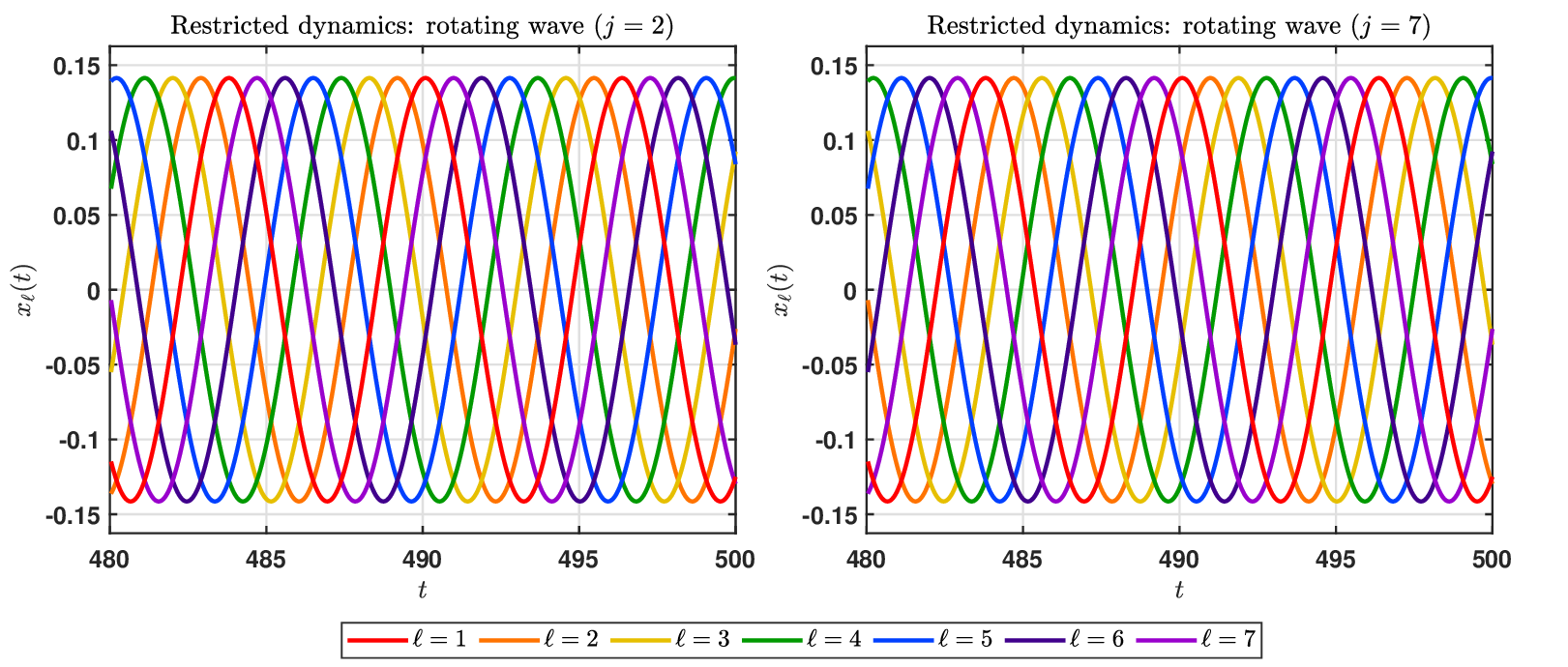} 
\caption{Simulations restricted to the corresponding rotating-wave invariant subspaces for $N=7$, $s=2$, $c=0.05$, $\omega=1$, and $\mu=\mu_2+0.02$. The left and right panels show the $j=2$ and $j=7$ rotating waves, respectively. Only the real components $x_\ell(t)=\operatorname{Re}(z_\ell(t))$ are plotted.}
\label{N7_s2_rotating_waves_j2_j7_restricted}
\end{figure}

\begin{figure}[htbp]       
\includegraphics[width=1.0\columnwidth]{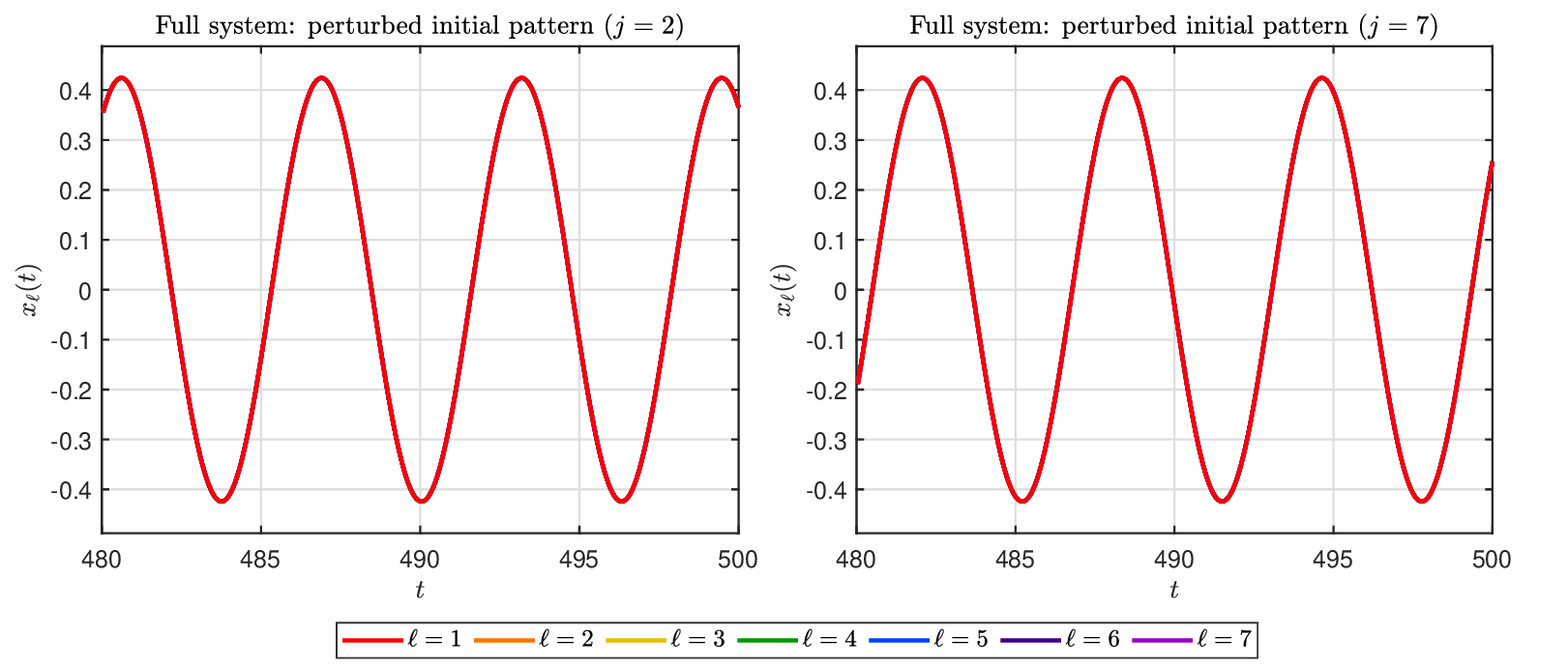} 
\caption{Simulations of the full system \eqref{SL-network-xy-N} starting from the $j=2$ and $j=7$ initial phase patterns in \eqref{j=2-initial} and \eqref{j=7-initial}, with initial amplitude $r_0$. A small random complex perturbation of scale $10^{-10}$ is added to each initial value $z_{\ell}(0)$. The left and right panels correspond to the $j=2$ and $j=7$ initial patterns, respectively. In both cases, the seven real components $x_\ell(t)$ eventually approach the same periodic motion, indicating convergence toward the synchronous periodic solution.}
\label{N7_s2_rotating_waves_j2_j7_full}
\end{figure}

}

{\color{black} We now consider the all-to-all coupling case separately. Since every oscillator is coupled to every other oscillator with the same coupling strength, all non-synchronous blocks have the same critical parameter value.}

\begin{proposition}[Critical parameter values in the all-to-all case]\label{all-to-all_hopf}
Consider system \eqref{SL-network-xy-N} with all-to-all coupling; {\color{black} that is, $s=\frac{N}{2}$ when $N$ is even, and $s=\frac{N-1}{2}$ when $N$ is odd.} The block $M_1$ has eigenvalues
$$
\lambda_1^\pm(\mu)=\mu\pm \im\omega,
$$
whereas each block $M_j$, $j=2,3,\dots,N$, has eigenvalues
$$
\lambda_j^\pm(\mu)=(\mu-Nc)\pm \im\omega.
$$
Consequently, {\color{black} at} $\mu=0$, {\color{black} only the block $M_1$ contributes a} pair of purely imaginary eigenvalues $\pm\im\omega$. {\color{black} This corresponds to the synchronous Hopf bifurcation described in Theorem \ref{first bifurcation}.} {\color{black} At $\mu=Nc$, all non-synchronous blocks $M_2,M_3,\dots,M_N$ become critical simultaneously. Each of these blocks has eigenvalues $\pm\im\omega$, so there are $N-1$ such pairs.}
\end{proposition}

{\color{black}
\begin{remark}
In the all-to-all case, every non-synchronous block has the same pair of eigenvalues $(\mu-Nc)\pm\im\omega$. Hence all non-synchronous blocks become critical simultaneously at $\mu=Nc$.
\end{remark}
}

{\color{black}
\begin{remark}
The blocks \(M_2,\ldots,M_N\) represent non-synchronous spatial
Fourier modes, rather than individual oscillators. For
\(0<\mu<Nc\), the synchronous block \(M_1\) has eigenvalues
\(\mu\pm\im\omega\), so the origin is already unstable in the
synchronous direction, whereas each non-synchronous block has
eigenvalues $(\mu-Nc)\pm\im\omega$ and is therefore linearly damped. At \(\mu=Nc\), all
non-synchronous spatial modes become critical simultaneously.
Thus, this modal decomposition should not be interpreted as saying
that some individual oscillators are oscillating while the others
undergo amplitude death.

For the complete graph, \(\lambda_{\max}(L)=N\). Hence the boundary
\(\mu=cN\) is also the threshold appearing in the sufficient
instability condition \(\mu>c\lambda_{\max}(L)\) of
Lemma~\ref{amplitide-death}. The two statements have different roles:
Lemma~\ref{amplitide-death} gives a sufficient criterion for instability
of complete amplitude death, whereas Proposition~\ref{all-to-all_hopf}
gives the exact critical value of the non-synchronous Fourier modes.
By contrast, the direct global synchronization condition in
Theorem~\ref{cor:identical-complete} becomes
\(\mu>c(N-1)\) on the complete graph.
\end{remark}
}

We have {\color{black} identified} the {\color{black} synchronous critical value} and the critical values {\color{black}of the non-synchronous modes}. We next compute the block matrices $M_j$ and their {\color{black} eigenvalues} for the different coupling ranges and the two parities of $N$. These calculations verify the
formulas used above; {\color{black}further details are provided in Appendix \ref{sec:App C}}.

By Lemma \ref{unitary_diagonalization}, the Jacobian matrix $A\in\mathscr{BC}_{N,2}$ is unitarily similar to $\operatorname{diag}(M_1,\dots,M_N)$. {\color{black} Hence the eigenvalues of $A$ are obtained by collecting the eigenvalues of $M_1,M_2,\dots,M_N$, counted with algebraic multiplicity.} {\color{black} It therefore remains to compute these $2\times2$ blocks. The resulting formulas depend on the parity of $N$ and the coupling range $s$. We use the following elementary identity repeatedly.}

\begin{remark}\label{cos-formula}
For any $\theta\in\mathbb{R}$ with $\sin\bigl(\frac{\theta}{2}\bigr)\neq 0$, the identity
$$
\sum_{k=1}^{s}\cos(k\theta) 
=\frac{\sin\bigl(\frac{s\theta}{2}\bigr)\cos\bigl(\frac{(s+1)\theta}{2}\bigr)}{\sin\bigl(\frac{\theta}{2}\bigr)}
$$
holds. In particular, for $\theta=\frac{2(j-1)\pi}{N}$ with $j=2,3,\dots,N$, we obtain
$$
\frac{\sin\bigl[\frac{s(j-1)\pi}{N}\bigr]\cos\bigl[\frac{(s+1)(j-1)\pi}{N}\bigr]}{\sin\bigl[\frac{(j-1)\pi}{N}\bigr]}=\sum_{k=1}^{s}\cos\Bigl[\frac{2k(j-1)\pi}{N}\Bigr].
$$
{\color{black} Therefore, in the non-all-to-all cases considered below,}
$$
s-\frac{\sin\bigl[\frac{s(j-1)\pi}{N}\bigr]\cos\bigl[\frac{(s+1)(j-1)\pi}{N}\bigr]}{\sin\bigl[\frac{(j-1)\pi}{N}\bigr]}=\sum_{k=1}^{s}\Biggl(1-\cos\Bigl[\frac{2k(j-1)\pi}{N}\Bigr]\Biggr)>0.
$$
\end{remark}

\smallskip
\noindent
Case 1. $N$ is even.

\smallskip
\noindent
(a) $1\leq s\leq \frac{N}{2}-1$ (non-all-to-all coupling).
In this case,
$$
A_1=
\begin{bmatrix}
\mu-2sc & -\omega\\
\omega & \mu-2sc
\end{bmatrix},
$$
and for ${\color{black}m}=1,2,\dots,\frac{N}{2}-1$,
$$
{\color{black} A_{1+m}=A_{N-(m-1)}}=
\begin{cases}
\begin{bmatrix}
c & 0\\
0 & c
\end{bmatrix},
& {\color{black}m}=1,2,\dots,s,\\[12pt]
\begin{bmatrix}
0 & 0\\
0 & 0
\end{bmatrix},
& {\color{black}m}=s+1,\dots,\frac{N}{2}-1.
\end{cases}
$$
Since $N$ is even, we have $1+\frac{N}{2}=N-(\frac{N}{2}-1)$, and thus
$$
A_{1+\frac{N}{2}}=
\begin{bmatrix}
0 & 0\\
0 & 0
\end{bmatrix}.
$$
Accordingly,
$$
B_0=
\begin{bmatrix}
\mu-2sc & \omega\\
-\omega & \mu-2sc
\end{bmatrix},
\quad
{\color{black}B_{m}=B_{N-m}}=
\begin{cases}
\begin{bmatrix}
c & 0\\
0 & c
\end{bmatrix},
& {\color{black}m}=1,2,\dots,s,\\[12pt]
\begin{bmatrix}
0 & 0\\
0 & 0
\end{bmatrix},
& {\color{black}m}=s+1,\dots,\frac{N}{2}-1,
\end{cases}
$$
and the remaining block is
$$
B_{\frac{N}{2}}=
\begin{bmatrix}
0 & 0\\
0 & 0
\end{bmatrix}.
$$
By \eqref{diagonalization_M}, we obtain
$$
M_1=
\begin{bmatrix}
\mu & \omega\\
-\omega & \mu
\end{bmatrix}
$$
and, for $j=2,3,\dots,N$,
\scriptsize
$$
M_j=
\begin{bmatrix}
\displaystyle \mu-2c\left(s-\frac{\sin\bigl[\frac{s(j-1)\pi}{N}\bigr]\cos\bigl[\frac{(s+1)(j-1)\pi}{N}\bigr]}{\sin\bigl[\frac{(j-1)\pi}{N}\bigr]}\right)
& \omega\\[10pt]
-\omega &
\displaystyle \mu-2c\left(s-\frac{\sin\bigl[\frac{s(j-1)\pi}{N}\bigr]\cos\bigl[\frac{(s+1)(j-1)\pi}{N}\bigr]}{\sin\bigl[\frac{(j-1)\pi}{N}\bigr]}\right)
\end{bmatrix}.
$$
\normalsize
The details are given in Appendix~\ref{sec:App C}.

\smallskip
\noindent
(b) $s=\frac{N}{2}$ (all-to-all coupling).
In this case,
$$
A_1=
\begin{bmatrix}
\mu-(N-1)c & -\omega\\
\omega & \mu-(N-1)c
\end{bmatrix},
$$
and
\begin{equation}\label{all-to-all-even-1}
A_{1+{\color{black}m}}=A_{N-({\color{black}m}-1)}=A_{1+\frac{N}{2}}=
\begin{bmatrix}
c & 0\\
0 & c
\end{bmatrix},
\quad {\color{black}m}=1,2,\dots,\frac{N}{2}-1.
\end{equation}
{\color{black} Here $A_{1+m}$ and $A_{N-(m-1)}$ form symmetric pairs, while $A_{1+\frac{N}{2}}$ is the single block corresponding to distance $N/2$. Together, these indices in \eqref{all-to-all-even-1} cover all $j=2,3,\dots,N$. Thus, equivalently,}
$$
A_j=
\begin{bmatrix}
c & 0\\
0 & c
\end{bmatrix},\quad j=2,3,\dots,N.
$$
A direct application of \eqref{diagonalization_M} yields
\begin{equation}\label{even-all-to-all}
M_1=
\begin{bmatrix}
\mu & \omega\\
-\omega & \mu
\end{bmatrix},
\quad
M_j=
\begin{bmatrix}
\mu-Nc & \omega\\
-\omega & \mu-Nc
\end{bmatrix},
\quad j=2,3,\dots,N.
\end{equation}

\medskip
\noindent
Case 2. $N$ is odd and $1\le s\le \frac{N-1}{2}$.
In this case,
$$
A_1=
\begin{bmatrix}
\mu-2sc & -\omega\\
\omega & \mu-2sc
\end{bmatrix},
$$
and
$$
A_{1+{\color{black}m}}=A_{N-({\color{black}m}-1)}=
\begin{cases}
\begin{bmatrix}
c & 0\\
0 & c
\end{bmatrix},
& {\color{black}m}=1,2,\dots,s,\\[12pt]
\begin{bmatrix}
0 & 0\\
0 & 0
\end{bmatrix},
& {\color{black}m}=s+1,\dots,\frac{N-1}{2}.
\end{cases}
$$
By \eqref{diagonalization_M}, we obtain
$$
M_1=
\begin{bmatrix}
\mu & \omega\\
-\omega & \mu
\end{bmatrix}
$$
and, for $j=2,3,\dots,N$,
\scriptsize
$$
M_j=
\begin{bmatrix}
\displaystyle \mu-2c\left(s-\frac{\sin\bigl[\frac{s(j-1)\pi}{N}\bigr]\cos\bigl[\frac{(s+1)(j-1)\pi}{N}\bigr]}{\sin\bigl[\frac{(j-1)\pi}{N}\bigr]}\right)
& \omega\\[10pt]
-\omega &
\displaystyle \mu-2c\left(s-\frac{\sin\bigl[\frac{s(j-1)\pi}{N}\bigr]\cos\bigl[\frac{(s+1)(j-1)\pi}{N}\bigr]}{\sin\bigl[\frac{(j-1)\pi}{N}\bigr]}\right)
\end{bmatrix}.
$$
\normalsize
The details are given in Appendix~\ref{sec:App C}.

\begin{remark}
{\color{black} The endpoint $s=\frac{N-1}{2}$ in Case 2 is precisely the all-to-all case for odd $N$. Equivalently, $N=2s+1$.} For $j=2,3,\dots,N$, we compute
\begin{align*}
\frac{\sin\bigl[\tfrac{s(j-1)\pi}{N}\bigr]\cos\bigl[\tfrac{(s+1)(j-1)\pi}{N}\bigr]}
{\sin\bigl[\tfrac{(j-1)\pi}{N}\bigr]}
&=\frac12\,
\frac{\sin\bigl[\tfrac{(2s+1)(j-1)\pi}{N}\bigr]-\sin\bigl[\tfrac{(j-1)\pi}{N}\bigr]}
{\sin\bigl[\tfrac{(j-1)\pi}{N}\bigr]}\\[4pt]
&=\frac12\,
\frac{\sin\bigl[(j-1)\pi\bigr]-\sin\bigl[\tfrac{(j-1)\pi}{N}\bigr]}
{\sin\bigl[\tfrac{(j-1)\pi}{N}\bigr]}
=-\frac12.
\end{align*}
Consequently,
$$
\mu-2c\left(s-\frac{\sin\bigl[\tfrac{s(j-1)\pi}{N}\bigr]\cos\bigl[\tfrac{(s+1)(j-1)\pi}{N}\bigr]}
{\sin\bigl[\tfrac{(j-1)\pi}{N}\bigr]}\right)
=\mu-2c\Bigl(\frac{N-1}{2}+\frac12\Bigr)=\mu-Nc,
$$
and hence
$$
M_j=
\begin{bmatrix}
\mu-Nc & \omega\\
-\omega & \mu-Nc
\end{bmatrix},
\quad j=2,3,\dots,N,
$$
{\color{black} which is the same formula as in the even all-to-all case \eqref{even-all-to-all}}.
\end{remark}

\appendix
\section{Persistence of Nonvanishing Amplitudes}\label{sec:App A}
\begin{proof}[Proof of Lemma \ref{antideath}] \color{black} Define
\begin{align*}
    R^2(t):=\sum_{\ell=1}^{N}|z_\ell(t)|^2.
\end{align*}
Recalling \eqref{eq:SL-network}, \eqref{L=D-A}, and \eqref{-cL}, we
compute
\begin{align}
    \frac12\frac{\mathrm d}{\mathrm dt}R^2
    =
    \mu R^2
    -\sum_{\ell=1}^{N}|z_\ell|^4
    -c\,z^*Lz \notag\leq
    \mu R^2-\frac1N R^4,
    \label{R logistic}
\end{align}
where $z^*=\overline z^{\mathsf T}$, and we used \(z^*Lz\geq0\) and
\begin{align*}
    \sum_{\ell=1}^{N}|z_\ell|^4
    \geq
    \frac1N
    \left(\sum_{\ell=1}^{N}|z_\ell|^2\right)^2.
\end{align*}
By comparison with the scalar logistic equation
\begin{equation*}
    \dot Y
    =
    2Y\left(\mu-\frac{Y}{N}\right),
    \quad
    Y(0)=R^2(0),
\end{equation*}
and the strict inequality $R^2(0)<N\mu$, we obtain
\begin{align}\label{strict R bound}
    R^2(t)\leq Y(t)<N\mu
\end{align}
throughout the maximal interval of existence. In particular, \(z(t)\)
remains bounded, and hence the continuation theorem for ordinary
differential equations implies that the solution exists globally.

We next establish the positive lower bound. Let
\begin{align*}
    h(x):=\mu x-x^3.
\end{align*}
The function \(h\) attains its maximum at \(x=\sqrt{\mu/3}\), and
\begin{align*}
    \max_{x>0}h(x)
    =
    \frac{2\mu^{3/2}}{3\sqrt3}.
\end{align*}
By \eqref{c star},
\begin{align*}
    c\sqrt{N\mu}\,\lambda_{\max}(L)
    <
    \frac{2\mu^{3/2}}{3\sqrt3}.
\end{align*}
Therefore, \eqref{definition r star} has exactly two positive roots, and
its smaller root satisfies
\begin{align*}
    0<r^*<\sqrt{\frac{\mu}{3}}.
\end{align*}

For every $\ell$, at every time for which \(r_\ell(t)>0\), we have
\begin{equation}\label{component lower}
\begin{aligned}
    \frac12\frac{\mathrm d}{\mathrm dt}r_\ell^2
    &=\mu r_\ell^2-r_\ell^4
    -c\,\Re\!\left((Lz)_\ell\overline{z_\ell}\right)\\
    &\geq
    \mu r_\ell^2-r_\ell^4
    -c\,r_\ell |(Lz)_\ell|\\
    &\geq
    r_\ell
    \left(
        \mu r_\ell-r_\ell^3
        -c\,\lambda_{\max}(L)R
    \right),
    \end{aligned}
\end{equation}
where
\[
    |(Lz)_\ell|
    \leq
    \|Lz\|_2
    \leq
    \|L\|_2\|z\|_2
    =
    \lambda_{\max}(L)R.
\]

Suppose, by contradiction, that one of the amplitudes reaches \(r^*\).
Since \(r_\ell(0)>r^*\) for every \(\ell\), the first-contact time
\[
    \tau
    :=
    \inf\left\{
        t>0:
        \min_{1\leq\ell\leq N}r_\ell(t)=r^*
    \right\}
\]
would be positive and finite. Choose \(j\) such that
\[
    r_j(\tau)=r^*.
\]
Since \(r_j(t)>r^*\) for \(0\leq t<\tau\), the first-contact property
gives
\[
    \left.
    \frac{\mathrm d}{\mathrm dt}r_j^2(t)
    \right|_{t=\tau}
    \leq0.
\]
On the other hand, \eqref{strict R bound} and
\eqref{component lower} imply
\begin{align*}
    \left.
    \frac12\frac{\mathrm d}{\mathrm dt}r_j^2(t)
    \right|_{t=\tau}
    &>
    r^*
    \left(
        \mu r^*-(r^*)^3
        -c\sqrt{N\mu}\,\lambda_{\max}(L)
    \right)=0,
\end{align*}
where we used \eqref{definition r star}. This is a contradiction.
Therefore,
\[
    r_\ell(t)>r^*,
    \quad
    t\geq0,\quad \ell=1,\ldots,N.
\]
In particular, no amplitude vanishes, and the polar system
\eqref{eq:polar-theta} is globally well-defined.

Finally, define
\[
    M(t):=\max_{1\leq \ell\leq N}r_\ell(t).
\]
Since \(M\) is the maximum of finitely many continuously differentiable
functions, it is locally Lipschitz and hence differentiable almost
everywhere. At every point of differentiability,
\[
    \dot M(t)
    =
    \max_{\ell\in I(t)}\dot r_\ell(t),
    \quad
    I(t):=\{\ell:r_\ell(t)=M(t)\}.
\]
For every \(\ell\in I(t)\), the first equation of
\eqref{eq:polar-theta} yields
\begin{align*}
    \dot r_\ell
    &=
    (\mu-M^2)M
    +
    c\sum_{k=1}^{N}a_{\ell k}
    \left(
        r_k\cos(\theta_k-\theta_\ell)-M
    \right)\leq
    (\mu-M^2)M,
\end{align*}
because
\[
    r_k\cos(\theta_k-\theta_\ell)
    \leq r_k\leq M.
\]
Therefore,
\[
    \dot M(t)
    \leq
    (\mu-M^2(t))M(t)
\]
for almost every \(t\geq0\). By comparison with the solution \(y\) of
\[
    \dot y=(\mu-y^2)y,
    \qquad
    y(0)=M(0),
\]
we obtain \(M(t)\leq y(t)\) for all \(t\geq0\). Since
\(y(t)\to\sqrt{\mu}\) as \(t\to\infty\), it follows that
\begin{align*}
    \limsup_{t\to\infty}
    \max_{1\leq\ell\leq N}r_\ell(t)
    \leq\sqrt{\mu}.
\end{align*}
This completes the proof of Lemma~\ref{antideath}.
\end{proof}

\section{Proof of Theorem \ref{first bifurcation}}\label{app:HopfProof}
{\color{black} We first use the exact complex form of the reduced system \eqref{SL-network-xy-N-synchronous} to obtain the synchronous periodic branch and its orbital stability. We then apply the bifurcation formulas following Hassard--Kazarinoff--Wan \cite{hassard1981} to determine the direction and stability of the bifurcation in the usual Hopf normal-form terminology.} {\color{black} Setting $u_{\mathcal S}=x+\im y$, the reduced system \eqref{SL-network-xy-N-synchronous} can be written exactly as
\begin{equation}\label{A_complex_appendix}
\dot{u}_{\mathcal S}=(\mu+\im\omega)u_{\mathcal S}-|u_{\mathcal S}|^2u_{\mathcal S}.
\end{equation}
Thus \eqref{A_complex_appendix} is already in the complex Stuart--Landau normal form. If $\rho_{\mathcal S}=|u_{\mathcal S}|^2$, then \eqref{A_complex_appendix} gives 
\begin{equation}\label{rho_1}
    \dot{\rho}_{\mathcal S}=2\operatorname{Re}(\overline{u_{\mathcal S}}\dot u_{\mathcal S})=2\rho_{\mathcal S}(\mu-\rho_{\mathcal S}).
\end{equation}
Hence, for $\mu>0$, the nonzero equilibrium $\rho_{\mathcal S}=\mu$ corresponds to the synchronous periodic solution
$$
u_{\mathcal S}(t)=\sqrt{\mu}\,e^{\im(\omega t+\varphi_0)},\quad \varphi_0\in\mathbb{R},
$$
and hence $z_\ell(t)=u_{\mathcal S}(t)$ for all $\ell=1,2,\dots,N$ on $\mathcal{S}$. Moreover,
$$
\frac{d}{d\rho_{\mathcal S}}\left[2\rho_{\mathcal S}(\mu-\rho_{\mathcal S})\right]|_{\rho_{\mathcal S}=\mu}=-2\mu<0.
$$
Thus $\rho_{\mathcal S}=\mu$ is an asymptotically stable equilibrium of \eqref{rho_1}. Writing $u_{\mathcal S}=\sqrt{\rho_{\mathcal S}}e^{\im\theta}$, one also obtains $\dot\theta=\omega$ whenever $\rho_{\mathcal S}>0$. Hence the phase only determines the position on the periodic orbit, and the corresponding synchronous periodic solution is orbitally asymptotically stable relative to the synchronous manifold $\mathcal{S}$.

}

{\color{black} We now express the same conclusion in the Hopf normal form notation of Hassard--Kazarinoff--Wan \cite{hassard1981}. For this purpose, write} 
\begin{align*}
F_1(x,y)&:=\mu x-\omega y-x(x^2+y^2), \\
F_2(x,y)&:=\omega x+\mu y-y(x^2+y^2).
\end{align*}
At $\mu=0$ and $(x,y)=(0,0)$, the linearization of system \eqref{SL-network-xy-N-synchronous} has the purely imaginary eigenvalues $\lambda_1^{\pm}(0)=\pm \im\omega$. Moreover, $\Re\bigl((\lambda_1^{\pm})'(0)\bigr)=1$, so the transversality condition is satisfied. Following \cite{hassard1981}, we introduce the coefficients $g_{11}$, $g_{02}$, $g_{20}$, and $G_{21}$, which are defined in terms of the second- and third-order partial derivatives of $(F_1,F_2)$. All derivatives in the following formulas are evaluated at $\mu=0$ and $(x,y)=(0,0)$.
\begin{equation*}
\begin{aligned}
g_{11} &:= \frac{1}{4}\left[\left(\frac{\partial^2 F_1}{\partial x^2}+
\frac{\partial^2 F_1}{\partial y^2}\right)+\im\left(\frac{\partial^2 F_2}{\partial x^2}+
\frac{\partial^2 F_2}{\partial y^2}\right)\right] \\
g_{02} &:= \frac{1}{4}\left[\left(\frac{\partial^2 F_1}{\partial x^2}-\frac{\partial^2 F_1}{\partial y^2}
-2\frac{\partial^2 F_2}{\partial x \partial y}\right)+\im\left(\frac{\partial^2 F_2}{\partial x^2}-
\frac{\partial^2 F_2}{\partial y^2}+2\frac{\partial^2 F_1}{\partial x \partial y}\right)\right]  \\
g_{20} &:= \frac{1}{4}\left[\left(\frac{\partial^2 F_1}{\partial x^2}-\frac{\partial^2 F_1}{\partial y^2}+
2\frac{\partial^2 F_2}{\partial x \partial y}\right)+\im\left(\frac{\partial^2 F_2}{\partial x^2}-
\frac{\partial^2 F_2}{\partial y^2}-2\frac{\partial^2 F_1}{\partial x \partial y}\right)\right]  \\
G_{21} &:= \frac{1}{8}\left[\left(\frac{\partial^3 F_1}{\partial x^3}+\frac{\partial^3 F_1}{\partial x \partial y^2}+
\frac{\partial^3 F_2}{\partial x^2 \partial y}+\frac{\partial^3 F_2}{\partial y^3}\right) +\im\left(\frac{\partial^3 F_2}{\partial x^3}+\frac{\partial^3 F_2}{\partial x \partial y^2}-
\frac{\partial^3 F_1}{\partial x^2 \partial y}-\frac{\partial^3 F_1}{\partial y^3}\right)\right]. 
\end{aligned}
\end{equation*}
All second-order partial derivatives of $F_1$ and $F_2$ vanish at the origin, {\color{black}so $g_{11}=g_{02}=g_{20}=0$. The nonzero third-order derivatives needed to compute $G_{21}$ are
$$
\frac{\partial^3F_1}{\partial x^3}=-6,\quad\frac{\partial^3F_1}{\partial x\partial y^2}=-2,\quad\frac{\partial^3F_2}{\partial x^2\partial y}=-2,\quad\frac{\partial^3F_2}{\partial y^3}=-6.
$$
All other third-order derivatives in the definition of $G_{21}$ are zero. Therefore,
$$
G_{21}=\frac{1}{8}(-6-2-2-6)=-2.
$$
Because all quadratic coefficients vanish, the additional terms in $g_{21}$ are also zero. Hence $g_{21}=G_{21}=-2$. Using these values, we obtain
\begin{equation*}
\begin{aligned}
C_{1}(0)
&:=\frac{\im}{2\omega}\Big(g_{20}g_{11}-2|g_{11}|^{2}-\frac{1}{3}|g_{02}|^{2}\Big)+\frac{g_{21}}{2} =-1.
\end{aligned}
\end{equation*}
Thus,
$$
\Re\bigl(C_{1}(0)\bigr)=-1,\quad\Im\bigl(C_{1}(0)\bigr)=0.
$$
Since $\lambda_1^{\pm}(\mu)=\mu\pm\im\omega$, we also have
$$
\Re\bigl((\lambda_1^{\pm})'(0)\bigr)=1,\quad \Im\bigl((\lambda_1^{\pm})'(0)\bigr)=0.
$$
The coefficients in \cite{hassard1981} are therefore given by
\begin{equation*}
\begin{aligned}
p_{2}
&:=-\frac{\Re\bigl(C_{1}(0)\bigr)}{\Re\bigl((\lambda_1^{\pm})'(0)\bigr)}
=1,\\
\zeta_{2}
&:=2\Re\bigl(C_{1}(0)\bigr)=-2,\\
T_{2}
&:=-\frac{1}{\omega}\Bigl(\Im\bigl(C_{1}(0)\bigr)+p_{2}\Im\bigl((\lambda_1^{\pm})'(0)\bigr)\Bigr)
=0.
\end{aligned}
\end{equation*}
Since $p_2>0$, the Hopf bifurcation at $\mu=0$ is supercritical. Since $\zeta_2<0$, the bifurcating periodic solutions are orbitally asymptotically stable in the reduced system \eqref{SL-network-xy-N-synchronous}. This agrees with the stability conclusion obtained directly from \eqref{rho_1}. Because \eqref{SL-network-xy-N-synchronous} describes the dynamics of the full system on the invariant synchronous manifold $\mathcal{S}$, this is precisely orbital asymptotic stability relative to $\mathcal{S}$. This completes the proof of Theorem \ref{first bifurcation}.
}

% For completeness, note that for each $j=2,3,\dots,N$, the corresponding critical eigenvalues satisfy $\Re\bigl((\lambda_j^{\pm})'(\mu_j)\bigr)=1\neq 0$ as stated in Proposition~\ref{transversality}.

% We verify the Hopf bifurcation of  the reduced system \eqref{SL-network-xy-N-synchronous} at $\mu=0$ and determine its direction and stability using the normal form formulas in \cite{hassard1981}.

\section{Detailed computation of $M_j$, $j=1,2,\dots,N$}\label{sec:App C}
{\color{black} In this appendix, we derive the block matrices $M_j$, $j=1,2,\dots,N$, used in the Fourier diagonalization of the Jacobian matrix. These calculations justify the formulas stated in Section \ref{Hopf}. Throughout this appendix, empty sums are understood to be zero.}

Recall that $w=e^{-2\pi\im/N}$. Then the Kronecker product $(\sqrt{N}\,F_N\otimes I_2)$ takes the explicit form
\scriptsize
\begin{align*} 
(\sqrt{N}\,F_N \otimes I_2)
&=
\begin{bmatrix}
1 & 0 & 1 & 0 & 1 & 0 & \cdots & 1 & 0 \\
0 & 1 & 0 & 1 & 0 & 1 & \cdots & 0 & 1 \\[3pt]
1 & 0 & w & 0 & w^2 & 0 & \cdots & w^{N-1} & 0 \\
0 & 1 & 0 & w & 0 & w^2 & \cdots & 0 & w^{N-1} \\[3pt]
1 & 0 & w^2 & 0 & w^4 & 0 & \cdots & w^{2(N-1)} & 0 \\
0 & 1 & 0 & w^2 & 0 & w^4 & \cdots & 0 & w^{2(N-1)} \\[3pt]
\vdots & \vdots & \vdots & \vdots & \vdots & \vdots & \ddots & \vdots & \vdots \\[3pt]
1 & 0 & w^{N-1} & 0 & w^{2(N-1)} & 0 & \cdots & w^{(N-1)(N-1)} & 0 \\
0 & 1 & 0 & w^{N-1} & 0 & w^{2(N-1)} & \cdots & 0 & w^{(N-1)(N-1)}
\end{bmatrix},
\end{align*}
\normalsize
{\color{black} In the notation of Lemma \ref{unitary_diagonalization}, define $B_{k-1}=F_2^*A_kF_2$, $k=1,2,\dots,N$.
By \eqref{diagonalization_M}, each block $M_j$ is given by
$$
M_j=\sum_{k=1}^{N}w^{(j-1)(k-1)}B_{k-1},\quad j=1,2,\dots,N.
$$
This formula is used in each of the cases below. Since
$$
F_2^*I_2F_2=I_2\quad\text{and}\quad 
F_2^*
\begin{bmatrix}
0&-\omega\\
\omega&0
\end{bmatrix}
F_2
=
\begin{bmatrix}
0&\omega\\
-\omega&0
\end{bmatrix},
$$
and since all coupling blocks other than $A_1$ are scalar multiples of $I_2$ or zero, they contribute only to the diagonal entries of $M_j$. Hence every block $M_j$ has off-diagonal entries
$$
m_j^{12}=\omega,\quad m_j^{21}=-\omega.
$$
We now compute the entries case by case.}

\medskip
\paragraph{Case 1(a). $N$ is even and $1\le s\le \frac{N}{2}-1$.}

We write
\begin{equation*}
{\renewcommand{\arraystretch}{1.3}
M_1=
\begin{bmatrix}
m_1^{11} & m_1^{12} \\
m_1^{21} & m_1^{22}
\end{bmatrix}
\quad\text{and}\quad
M_j=
\begin{bmatrix}
m_j^{11} & m_j^{12} \\
m_j^{21} & m_j^{22}
\end{bmatrix}},
\quad j=2,3,\dots,N.
\end{equation*}
We compute each entry of $M_1$ as follows:
\begin{align*}
m_1^{11}
&= (\mu-2sc)\cdot 1
  + \underbrace{c+\cdots+c}_{s}
  + \underbrace{0+\cdots+0}_{\frac{N}{2}-s-1}
  + \underbrace{0}_{1}
  + \underbrace{0+\cdots+0}_{\frac{N}{2}-s-1}
  + \underbrace{c+\cdots+c}_{s}
= \mu,
\end{align*}
and a similar computation yields $m_1^{22}=\mu$. Moreover,
\begin{align*}
m_1^{12}
&= \omega\cdot 1
  + \underbrace{0+\cdots+0}_{s}
  + \underbrace{0+\cdots+0}_{\frac{N}{2}-s-1}
  + \underbrace{0}_{1}
  + \underbrace{0+\cdots+0}_{\frac{N}{2}-s-1}
  + \underbrace{0+\cdots+0}_{s}
= \omega,
\end{align*}
and a similar computation yields $m_1^{21}=-\omega$.

\medskip
\noindent
Similarly, for $j=2,3,\dots,N$, we obtain
\begin{align*}
m_j^{11}
&= (\mu-2sc)\cdot 1
 + c\cdot\underbrace{\bigl(w^{j-1}+ w^{2(j-1)}+ \cdots + w^{s(j-1)}\bigr)}_{s}
 + 0\cdot\underbrace{\bigl(w^{(s+1)(j-1)} + \cdots + w^{(\frac{N}{2}-1)(j-1)}\bigr)}_{\frac{N}{2}-s-1} \\
&\quad
 + 0\cdot\underbrace{w^{\frac{N}{2}(j-1)}}_{1}
 + 0\cdot\underbrace{\bigl(w^{[N-(\frac{N}{2}-1)](j-1)} + \cdots + w^{[N-(s+1)](j-1)}\bigr)}_{\frac{N}{2}-s-1} \\
&\quad
 + c\cdot\underbrace{\bigl(w^{(N-s)(j-1)} + \cdots + w^{(N-1)(j-1)}\bigr)}_{s} \\
&= (\mu-2sc) + c(w^{j-1}+ w^{2(j-1)}+ \cdots + w^{s(j-1)}+w^{(N-s)(j-1)} + \cdots + w^{(N-1)(j-1)}) \\
&=(\mu-2sc) + c(w^{j-1}+ w^{2(j-1)}+ \cdots + w^{s(j-1)}+w^{-(j-1)}+w^{-2(j-1)}+w^{-s(j-1)}) \\
&= (\mu-2sc) + c\sum_{k=1}^{s} \Big[ w^{k(j-1)} + \frac{1}{w^{k(j-1)}} \Big] \\ 
&= (\mu-2sc) + c\sum_{k=1}^{s} 2\cos\Big[\frac{2k(j-1)\pi}{N} \Big] \\
&= \mu-2c\Bigg(s-\frac{\sin[\frac{s(j-1)\pi}{N}]\cos[\frac{(s+1)(j-1)\pi}{N}]}{\sin[\frac{(j-1)\pi}{N}]}\Bigg),
\end{align*}
where the last equality follows from the identity in Remark \ref{cos-formula}. A similar computation yields $m_j^{22}=m_j^{11}$. Furthermore,
\begin{align*}
m_j^{12}
&= \omega\cdot 1
  + \underbrace{0+\cdots+0}_{s}
  + \underbrace{0+\cdots+0}_{\frac{N}{2}-s-1}
  + \underbrace{0}_{1}
  + \underbrace{0+\cdots+0}_{\frac{N}{2}-s-1}
  + \underbrace{0+\cdots+0}_{s}
= \omega,
\end{align*}
and a similar computation yields $m_j^{21}=-\omega$.

\smallskip
\paragraph{Case 1(b). $N$ is even and $s = \frac{N}{2}$.}
\noindent
We compute each entry of $M_1$ as follows:
\begin{align*}
m_1^{11}
&= [\mu-(N-1)c]\cdot 1
  + \underbrace{c+c+\cdots+c}_{N-1}
= \mu,
\end{align*}
and a similar computation yields $m_1^{22}=\mu$. Moreover,
\begin{align*}
m_1^{12}
&= \omega\cdot 1
  + \underbrace{0+0+\cdots+0}_{N-1}= \omega,
\end{align*}
and a similar computation yields $m_1^{21}=-\omega$.

\medskip
\noindent
Similarly, for $j=2,3,\dots,N$, {\color{black}  we have $w^{j-1}\neq 1$ and $(w^{j-1})^N=1$. Therefore, the geometric-sum identity gives
$$
1+w^{j-1}+w^{2(j-1)}+\cdots+w^{(N-1)(j-1)}=0.
$$
Hence
$$
w^{j-1}+w^{2(j-1)}+\cdots+w^{(N-1)(j-1)}=-1,
$$
and therefore } 
\begin{align*}
m_j^{11}
&= [\mu-(N-1)c]\cdot 1
 + c\cdot\underbrace{\bigl(w^{j-1}+ w^{2(j-1)}+ \cdots + w^{(N-1)(j-1)}\bigr)}_{N-1} = \mu - Nc,
\end{align*}
and a similar computation yields $m_j^{22}=m_j^{11}$. Furthermore,
\begin{align*}
m_j^{12}
&= \omega\cdot 1
  + \underbrace{0+0+\cdots+0}_{N-1}= \omega,
\end{align*}
and a similar computation yields $m_j^{21}=-\omega$.

\smallskip
\paragraph{Case 2. $N$ is odd and $1\leq s \leq \frac{N-1}{2}$.} 
We compute each entry of $M_1$ as follows:
\begin{align*}
m_1^{11}
&= (\mu-2sc)\cdot 1
  + \underbrace{c+\cdots+c}_{s}
  + \underbrace{0+\cdots+0}_{\frac{N-1}{2}-s}
  + \underbrace{0+\cdots+0}_{\frac{N-1}{2}-s}
  + \underbrace{c+\cdots+c}_{s}
= \mu,
\end{align*}
and a similar computation yields $m_1^{22}=\mu$. Moreover,
\begin{align*}
m_1^{12}
&= \omega\cdot 1
  + \underbrace{0+\cdots+0}_{s}
  + \underbrace{0+\cdots+0}_{\frac{N-1}{2}-s}
  + \underbrace{0+\cdots+0}_{\frac{N-1}{2}-s}
  + \underbrace{0+\cdots+0}_{s}
= \omega,
\end{align*}
and a similar computation yields $m_1^{21}=-\omega$.

\medskip
\noindent
Similarly, for $j=2,3,\dots,N$, we obtain
\begin{align*}
m_j^{11}
&= (\mu-2sc)\cdot 1
 + c\cdot\underbrace{\bigl(w^{j-1}+ w^{2(j-1)}+ \cdots + w^{s(j-1)}\bigr)}_{s}
 + 0\cdot\underbrace{\bigl(w^{(s+1)(j-1)} + \cdots + w^{(\frac{N-1}{2})(j-1)}\bigr)}_{\frac{N-1}{2}-s} \\
&\quad
 + 0\cdot\underbrace{\bigl(w^{[N-(\frac{N-1}{2})](j-1)} + \cdots + w^{[N-(s+1)](j-1)}\bigr)}_{\frac{N-1}{2}-s} + c\cdot\underbrace{\bigl(w^{(N-s)(j-1)} + \cdots + w^{(N-1)(j-1)}\bigr)}_{s} \\
&= (\mu-2sc) +c\sum_{k=1}^{s} \Big[ w^{k(j-1)} + \frac{1}{w^{k(j-1)}} \Big] \\ 
&= (\mu-2sc) + c\sum_{k=1}^{s} 2\cos\Big[\frac{2k(j-1)\pi}{N} \Big] \\
&= \mu-2c\Bigg(s-\frac{\sin[\frac{s(j-1)\pi}{N}]\cos[\frac{(s+1)(j-1)\pi}{N}]}{\sin[\frac{(j-1)\pi}{N}]}\Bigg),
\end{align*}
where the last equality follows from the identity in Remark \ref{cos-formula}. A similar computation yields $m_j^{22}=m_j^{11}$. Furthermore,
\begin{align*}
m_j^{12}
&= \omega\cdot 1
  + \underbrace{0+\cdots+0}_{s}
  + \underbrace{0+\cdots+0}_{\frac{N-1}{2}-s}
  + \underbrace{0+\cdots+0}_{\frac{N-1}{2}-s}
  + \underbrace{0+\cdots+0}_{s}
= \omega,
\end{align*}
and a similar computation yields $m_j^{21}=-\omega$. Combining the diagonal and off-diagonal entries obtained above gives the block matrices stated in Section \ref{Hopf}.

\section*{Acknowledgements}
K.-W. Chen is supported by the Meiji Institute for Advanced Study of Mathematical Sciences, Meiji University. T.-Y. Hsiao is supported by  the European Union  ERC CONSOLIDATOR GRANT 2023 GUnDHam, Project Number: 101124921. Views and opinions expressed are however those of the authors only and do not necessarily reflect those of the European Union or the European Research Council. Neither the European Union nor the granting authority can be held responsible for them.

\end{document}